\newcommand{\pic}[2]{\raisebox{-.5\height}{\includegraphics[scale=#2]{#1}}}
\def\closure{\pic{closure}{.250}}
\def\Am{\pic{Am}{.150}}
\def\Ambarra{\pic{Ambarra}{.150}}
\def\Aij{\pic{Aij}{.150}}
\def\AijMarcado{\pic{AijMarcado}{.150}}
\def\meridianmap{\pic{meridianmap}{.20}}
\def\meridianmapRev{\pic{meridianmapRev}{.20}} 
\def\Xm{\pic{Xm}{.250}}
\def\Heckemeridian{\pic{Heckemeridian} {.300}}
\def\deltaA{\pic{deltaA} {.200}}
\def\deltaIdentity{\pic{deltaIdentity} {.300}}
\def\torusknot{\pic{torusknot45} {.200}}
\def\framechange{\pic{torusknot11} {.200}}
\def\Xor{\pic{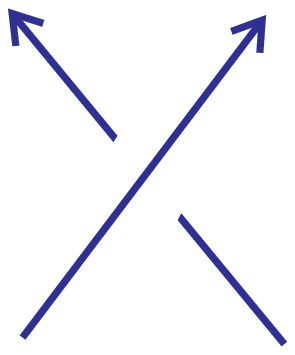} {.250}}
\def\Yor{\pic{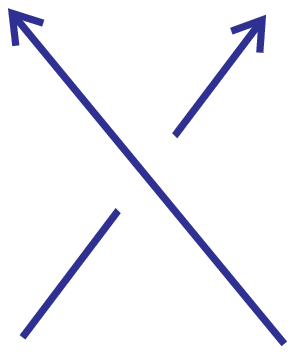} {.250}}
\def\Ior{\pic{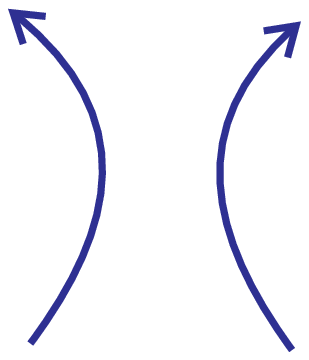} {.250}}
\def\unknot{\pic{unknot} {.150}}
\def\OtherRepresentationPm{\pic{OtherRepresentationPm}{.250}}
\def\Satellite{\pic{satellite}{.200}}
\def\Rmatrix{\pic{Rmatrix} {.250}}
\def\Idor{\pic{idor} {.250}}
\def\Rcurlor{\pic{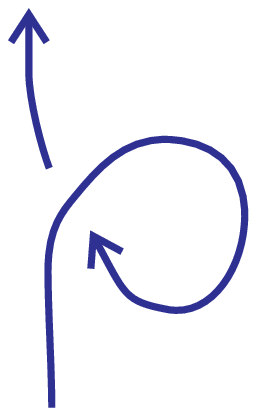} {.250}}
\def\Lcurlor{\pic{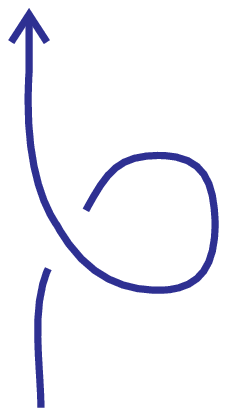} {.250}}
\def\hook{\pic{hook2} {.40}}
\def\betam{\pic{AnreferenciadoToro}{.15}}
\def\framingmap{\pic{framingchangemap}{.15}}
\def\Tiblackboardframing{\pic{Tiblackboardframing} {.300}}
\def\CC{\mathcal {C}}
\newcommand{\bibname}{}
\newcommand{\bc}{\begin{center}}
\newcommand{\ec}{\end{center}}
\newcommand{\be}{\begin{equation}}
\newcommand{\ee}{\end{equation}}
\newcommand{\beqn}{\begin{eqnarray*}}
\newcommand{\eeqn}{\end{eqnarray*}}
\newcommand{\gl}{\lambda}
\newcommand{\qlm}{Q_{\gl,\mu}}
\newcommand{\ds}{\displaystyle}
\newcommand{\x}{\times}
\newtheorem{theorem}{Theorem}
\newtheorem{corollary}[theorem]{Corollary}
\newtheorem{lemma}[theorem]{Lemma}
\newenvironment{remark}{\par\smallskip%
\noindent\textbf{Remark.}\  }%
{\par\smallskip}
\newenvironment{example}{\par\smallskip%
\noindent\textbf{Example.}\  }%
{\par\smallskip}
\renewenvironment{proof}[1][Proof]{\textit{#1.} }{\hfill \rule{0.5em}{0.5em}}
\begin{document}

\bc{\Large\bf Geometrical relations and plethysms\\ in the Homfly skein of the annulus\\[3mm]}
{\sc H. R. Morton {\rm and } P. M. G. Manch\'on\\[2mm]}
 {\small \sl Department of Mathematical Sciences\\ University of Liverpool\\ Peach Street, Liverpool L69 7ZL\\[2mm]
Department of Applied Mathematics, EUIT Industrial
\\ Universidad Polit\'ecnica de Madrid \\
Ronda de Valencia 3, 
28012 Madrid
}
\ec

\begin{abstract}
Let ${\cal C}_m$ be the closure of the Hecke algebra with $m$ strings $H_m$ in the oriented framed Homfly skein ${\cal C}$ of the annulus \cite{{skein},{HadjiMorton},{Sascha},{AistonMorton}}, which provides the natural parameter space for the Homfly satellite invariants of a knot. The submodule ${\cal C}_+ \subset {\cal C}$ spanned by the union $\cup _{m \geq 0} \,{\cal C}_m$ is an algebra, isomorphic to the algebra of the symmetric functions. 
Turaev's geometrical basis for $\CC_+$ consists of monomials in closed $m$-braids $A_m$, the closure of the braid $\sigma _{m-1}\cdots \sigma _2 \sigma _1$. 

We collect and expand formulae relating elements expressed in terms  of symmetric functions to Turaev's basis. We reformulate the formulae of Rosso and Jones for quantum $sl(N)$ invariants of cables \cite{Rosso} in terms of plethysms of symmetric functions, and use the connection  between quantum $sl(N)$ invariants and the skein $\CC_+$  to give a formula for the satellite of a cable as an element of the Homfly skein $\CC_+$. We can then analyse the  case where a cable is decorated by the pattern $P_d$  which corresponds to  a power sum in the symmetric function interpretation of $\CC_+$  to get  direct relations between the Homfly invariants of some diagrams decorated by power sums.
\end{abstract}

\section{Introduction}
The skein $\CC$ of the annulus provides the natural parameter space for organising a large collection of invariants of knots and links, collectively known as their {\em Homfly satellite invariants}. There is a 2-variable invariant $P(K;Q)\in {\bf Z}[v^{\pm1},s^{\pm1}]$ of a framed knot $K$ for each $Q\in\CC$, obtained as the Homfly polynomial of the satellite knot $K*Q$ with companion $K$ and pattern~$Q$.

 The skein $\CC$ has a natural structure as a commutative algebra, leading to several different ways of describing its elements, and consequently the resulting link invariants. For example,
one basis $\{\qlm\}$ for $\CC$ gives a ready translation to the quantum $sl(N)$ invariants of $K$, which are 1-parameter Laurent polynomials, determined by irreducible $sl(N)$ modules.

A subalgebra $\CC_+\subset\CC$ can be interpreted as the ring of symmetric functions in  infinitely many variables $x_1,\ldots,x_N, \ldots$, as described for example in \cite{skein}, and in this context the Schur functions $s_\gl(x_1,\ldots,x_N,\ldots)$ coincide with the basis elements $Q_{\gl,\phi}=Q_\gl$ above.
In the same spirit the full algebra $\CC$ can be interpreted as the ring of supersymmetric functions in variables $\{x_i\}$ and $\{x^*_j\}$.

The skein $\CC$ was originally studied by Turaev \cite{Turaev}, who showed that it is a free polynomial algebra on a doubly infinite sequence of closed braids $\{A_m\}$ and $\{A^*_m\}$.
The subalgebra $\CC_+$  is generated by the braids $\{A_m\}$ alone. In the symmetric function interpretation each $A_m$ is homogeneous of degree $m$.   The monomials $A_\gl=A_{\gl_1}A_{\gl_2}\cdots A_{\gl_k}$, as $\gl\vdash m$ runs through partitions of $m$ with parts $\gl_1\ge\gl_2\ge\cdots\ge\gl_k>0$,  form a geometrically flavoured basis for the linear subspace $\CC_m$ corresponding to the symmetric functions of degree $m$.

In the initial part of this paper we gather together and extend some of the combinatorial formulae relating Turaev's geometric basis $\{A_\gl\}_{\gl\vdash m}$ of $\CC_m$ to the elements representing the complete and elementary symmetric functions $h_m$ and $e_m$ and the power sum $P_m=\sum x_i^m$ of degree $m$.
The combinatorial properties of monomials $\{h_\gl\},\{e_\gl\}$ and $\{P_\gl\}$ in any of these functions are well-documented, for example by Macdonald \cite{Macdonald}, as are the   Jacobi-Trudi and other formulae relating the Schur functions $\{s_\gl\}$ to these.

We derive expressions for Aiston's more geometric representative $X_m$ for the power sum $P_m$ (theorem~\ref{Xm}) and for the mirror image $\overline{A}_m$ of $A_m$ (theorem~\ref{Ambarra}) in Turaev's basis $\{A_\gl\}$, following the skein theoretic arguments in \cite{{skein},{power}} combined with simple manipulation of formal power series.

There is a compact power series formula (\ref{eqnAH}), established  in \cite{skein}, giving $\{A_m\}$ in terms of  the complete symmetric functions $\{h_j\}$.  We use this formula to give a reverse transition expressing $h_m$ in Turaev's basis in theorem~\ref{HfromA}, although  we would have liked to get a tidier form for the coefficients which are given in closed form by lemma~\ref{theta}. In some sense this is one of the more extreme transitions from the geometric to the representation theoretic; the power sums $X_m$ or $P_m$ provide an intermediate state which has a foot in both camps, and correspondingly the transitions between  these elements and either $\{A_m\}$ or $\{h_m\}$ have a much easier form.

The formula (\ref{eqnAH}) can also be used directly to express $A_m$ in terms of the Schur functions, and hence in terms of the skein elements $\{Q_\gl\}$, resulting in a simple deformation of the combinatorial expression for the power sum $P_m$ as an alternating sum of $m$-hooks in theorem \ref{Ahook}. This formula can be derived from the work of Rosso and Jones, \cite{Rosso}, and was used by Aiston \cite{Aiston} in her original construction of a geometric representative for $P_m$.

 In the later part of the paper we interpret the descriptions of  Rosso and Jones, \cite{Rosso}, about quantum invariants of cables in the case of $sl(N)$, in terms of plethysms of symmetric functions. This involves the decoration of one of the   closed braids $T^n_m$ representing the $(m,n)$ torus knot by an element $Q\in\CC$ to form an element $T^n_m*Q$, and allows us to express $T^n_m*Q$ as itself an element of $\CC$,  in theorem \ref{skeinplethysm}. We apply this in the case where $Q$ is a power sum $P_d$ to establish a geometric relation between certain diagrams decorated by power sums in theorem \ref{powerHopf},
originally conjectured in work by the first author with Garoufalidis.

  Interest in Homfly power sum invariants of links, where  all components are decorated by power sums, has been stimulated by the work of Labastida and Mari\~ no \cite{LM}, following the conjectures of Ooguri and Vafa about the integrality of certain combinations of these invariants \cite{OV}.  The fact that power sums can be represented in terms of a small number of closed braids or tangles has given some hope that they may collectively have nice skein theoretic properties, and the results here represent some limited success in their understanding.

\subsection*{The organisation of the paper}

Section \ref{homfly} gives a brief account of Homfly skein theory, including the skein-theoretic model for Hecke algebras of type $A$ and their extensions to the skein of the annulus, $\CC$, related to these by the geometric operation of closure of braids and tangles.  We define the mirror map in a Homfly skein, and then describe Turaev's closed braid basis $\{A_\gl\}$ for the subalgebra $\CC_+$ of the skein $\CC$, following closely the account in \cite{skein}.

In section \ref{geometric} we introduce the geometric elements $X_m$ and derive   formulae for $X_m$ and the mirror image $\overline{A}_m$ in terms of Turaev's basis. The skein theory arguments, following largely the account in \cite{power}, lead to our formal power series derivation of the formulae.

Section \ref{symmetric} summarises the   
interpretation  of $\CC_+$ as symmetric functions.   We discuss the  representation of the complete symmetric functions $h_m$, the elementary symmetric functions $e_m$ and the power sums $P_m$, following \cite{skein}.  The power series equation (\ref{eqnAH}) relating the closed braids $\{A_m \}$ and the complete symmetric functions from \cite{skein} is inverted to provide a formula for $h_m$ in terms of Turaev's basis $\{A_\gl\}$. This gives immediately a corresponding  formula for $e_m$, complemented by a formula for $P_m$ arising from its close relation with $X_m$, given in equation (\ref{eqnPA}).

Section \ref{hook} introduces the meridian maps and the skein theoretic representatives $\{Q_\gl\}$ for the Schur functions $\{s_\gl\}$,
following Lukac \cite{Sascha}. In theorem \ref{Ahook} we expand equation (\ref{eqnAH})  using symmetric functions to express $A_m$ in the basis $\{Q_\gl\}$ of Schur functions.

Section \ref{quantum} describes the relation between the Homfly satellite invariants of a knot and its quantum $sl(N)$ invariants, using the skein theoretic model of the Hecke algebras and their idempotents in \cite{AistonMorton}, and the work of Lukac, \cite{Lukacthesis}.

Section \ref{cable} shows how to interpret the work of Rosso and Jones on quantum invariants of cables in terms of decoration of the cable patterns $T^n_m$ by elements of $\CC_+$, initially using the elements $Q_\gl$. 

Section \ref{powercable} applies this to show how a power sum $P_M$ in the skein $\CC$ enclosed by a meridian decorated  by another power sum $P_N$ can be represented  in $\CC$ by a simple closed braid decorated by the power sum $P_d$ where $d={\gcd(M,N)}$.

We conclude with a number of consequences of this result in section \ref{Ex}.

\section{Homfly skein theory} \label{homfly}

For a surface $F$ with some designated input and output boundary points
the (linear) {Homfly skein} of $F$ is defined as linear combinations of oriented diagrams in $F$, up to Reidemeister moves II and III,  modulo the skein relations

\bc\begin{enumerate}
\item \qquad{$\Xor\  -\ \Yor \qquad =\qquad{(s-s^{-1})}\quad\ \Ior \ ,$}
\vspace*{2mm}
\item \qquad
{$ \Rcurlor \quad=\quad {v^{-1}}\quad \Idor\ ,\qquad \Lcurlor \quad=\quad {v}\quad \Idor. $}
\end{enumerate}
\ec

It is an immediate consequence that 
\bc \Idor \unknot \quad=\quad $\delta$ \ \Idor,\ec
where $\delta =\ds\frac{v^{-1}-v}{s-s^{-1}}\in\Lambda$.
The coefficient ring $\Lambda$ is  taken as $Z[v^{\pm1},s^{\pm1}]$, with denominators $\{r\}=s^r-s^{-r}, r\ge1$.

The skein of the annulus is denoted by ${\cal C}$. It becomes a commutative algebra with a product induced by placing one annulus outside another. 

The skein of the rectangle with $m$ inputs at the top and $m$ outputs at the bottom is denoted by $H_m$. We define a product in $H_m$ by stacking one rectangle above the other, obtaining the Hecke algebra $H_m(z)$, when $z=s-s^{-1}=\{1\}$ and the coefficients are extended to $\Lambda$. The Hecke algebra $H_m$ can be also seen as the group algebra of Artin's braid group $B_m$ generated by the  elementary braids $\sigma _i$, $i=1, \dots , m-1$, modulo the further quadratic relations $\sigma _i^2=z\sigma _i +1$.

The closure map from $H_m$ to ${\cal C}$ is the $\Lambda$-linear map induced by considering the closure $\hat{T}$ of a tangle $T$ in the annulus (see figure \ref{closuremap}). The image of this map is denoted by ${\cal C}_m$. 
\begin{figure}[ht!]
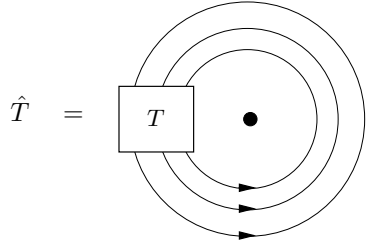

\labellist
\small
\pinlabel $T$ at 58 185
\endlabellist
  \begin{center}
$\hat{T}\quad =\quad \closure $
  \end{center}
\caption{The closure map}\label{closuremap}
\end{figure}

The mirror map in the skein of $F$, defined as in \cite{skein}, is the conjugate linear  involution $(\overline{\phantom{w} })$ on the skein of $F$ induced by switching all crossings on diagrams and inverting $v$ and $s$ in $\Lambda$. We will use it mainly in the skein $\CC$, noting also that $\overline{z}=-z$.

The linear subspace ${\cal C}_m$ has a useful interpretation as the space of symmetric polynomials of degree $m$ in variables $x_1, \dots , x_N$ for large enough $N$.  Moreover, the submodule ${\cal C}_+ \subset {\cal C}$ spanned by the union $\cup _{m \geq 0}\, {\cal C}_m$ is a subalgebra of ${\cal C}$ isomorphic to the algebra of the symmetric functions (see section \ref{symmetric}). 

We now describe Turaev's geometrical basis of the skein ${\cal C}_+$. The element $A_m \in {\cal C}_m$ is the closure of the braid $\sigma _{m-1}\cdots \sigma _2 \sigma _1 \in H_m$, and $\overline{A}_m$ is its mirror image (see figure \ref{Am}). 
\begin{figure}[ht!]
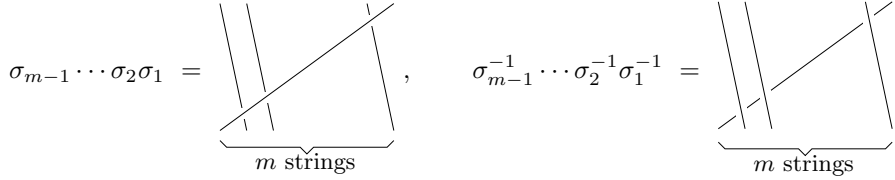
 
\labellist\small
\pinlabel{$m$ strings} at 234 -25
\endlabellist
\bc $\sigma _{m-1}\cdots \sigma _2 \sigma _1 \ =\ $ \Am \ ,
\labellist \small
\pinlabel{$m$ strings} at 234 -25
\endlabellist
 $\qquad\sigma _{m-1}^{-1}\cdots \sigma _2^{-1} \sigma _1^{-1} \ =\ $ \Ambarra \ 
\ec
\vspace*{3mm}
\caption{$A_m$ (resp. $\overline{A}_m$) is the closure of $\sigma _{m-1}\cdots \sigma _2 \sigma _1$ (resp. $\sigma_{m-1}^{-1}\cdots \sigma _2^{-1} \sigma _1^{-1}$)}\label{Am}
\end{figure}

Given a partition $\lambda =(\lambda _1, \dots , \lambda _l)$ of $m$ with length $l$ (we will just write $\lambda \vdash m$), we define the monomial $A_{\lambda}$ by the formula $A_{\lambda }=A_{\lambda _1}\cdots A_{\lambda _l}$. The monomials $\{ A_{\lambda } \}_{\lambda \vdash m}$ constitute a basis of ${\cal C}_m$ (\cite{Turaev}), and the monomials $\{A_\gl\}$ together form  {\em Turaev's  geometric basis} for $\CC_+$. 

\section{Geometric relations in the skein of the annulus}\label{geometric}
 We define intermediate closed braids $A_{i,j}$ between $A_m$ and $\overline{A}_m$, with $i,j \geq 0$, by successively switching one of the crossings as shown in figure \ref{figAij}.

\begin{figure}[ht!]
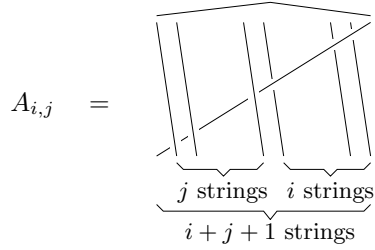

\labellist\small
\pinlabel {$j$ strings} at 210 73
\pinlabel {$i$ strings} at 500 73
\pinlabel {$i+j+1$ strings} at 337 -37
\endlabellist
\bc
$A_{i,j}\quad =\quad $ \Aij
\ec
\vspace*{3mm}
\caption{The closed braid $\ A_{i,j} \ (i,j \geq 0)$} \label{figAij}
\end{figure}

Note that $A_m=A_{m-1,0}$ and $\overline{A}_m=A_{0,m-1}$. 

We define the element $X_m$ in the skein $\CC$, shown in figure  \ref{figXm}, as the sum of $m$ closed $m$-braids,
\[
X_m=\sum_{j=0}^{m-1}A_{m-1-j,j}.
\]

\begin{figure}[ht!]
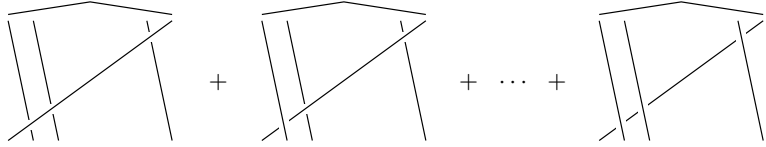

  \begin{center}{
\labellist \small
  \pinlabel{$+$} at 320 90
  \pinlabel{$+ \ \ \cdots \ \ +$} at 766 90
  \endlabellist
 }
    \Xm
    \caption{The element $X_m$} \label{figXm}
  \end{center}
\end{figure}

The elements $\overline{A}_m$ and $X_m$ are readily related to the elements $A_m$ by two formal power series formulae.

Write \[A(t)=1+z\sum_{i=1}^\infty A_i t^i\] and its mirror image \[\overline{A(t)}=1-z\sum_{i=1}^\infty \overline{A}_i t^i.\]

\begin{theorem} \label{AAbar} \[A(t)\overline{A(t)}=1.\]
\end{theorem}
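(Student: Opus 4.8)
The plan is to prove the power series identity degree by degree, comparing coefficients of $t^m$. Expanding the product directly gives
\[
A(t)\overline{A(t)}=1+z\sum_{m\ge1}\Big(A_m-\overline{A}_m-z\sum_{i=1}^{m-1}A_i\overline{A}_{m-i}\Big)t^m ,
\]
so the constant term is already $1$, and (since $z=s-s^{-1}\neq0$ in $\Lambda$) the whole assertion reduces to the family of skein identities
\[
A_m-\overline{A}_m=z\sum_{i=1}^{m-1}A_i\overline{A}_{m-i},\qquad m\ge1 ,
\]
one for each degree $m$. For $m=1$ the right-hand side is empty and this is the tautology $A_1=\overline{A}_1$, since $A_1$ is the core circle of the annulus and the mirror map fixes it.

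The geometric input is a single-crossing relation among the intermediate braids $A_{i,j}$ of figure~\ref{figAij}. Passing from $A_{i,j}$ to $A_{i-1,j+1}$ switches exactly one positive crossing into a negative one, so applying the Homfly skein relation~(1) at that crossing gives
\[
A_{i,j}-A_{i-1,j+1}=z\,A_i\,\overline{A}_{j+1},\qquad i\ge1,\ j\ge0 ,
\]
where the oriented smoothing $\Ior$ of the crossing disconnects the closed $(i+j+1)$-braid in the annulus into two nested closures: a copy of the all-positive $A_i$ on $i$ strings and a copy of the mirror braid $\overline{A}_{j+1}$ on $j+1$ strings. The count $i+(j+1)=i+j+1$ confirms the bookkeeping, and the $m=2$ instance $A_{1,0}-A_{0,1}=z\,A_1\overline{A}_1$ is simply the closure of the $H_2$ identity $\sigma_1-\sigma_1^{-1}=z\cdot\mathrm{id}$.

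Finally I would telescope along the chain $A_m=A_{m-1,0}\to A_{m-2,1}\to\cdots\to A_{0,m-1}=\overline{A}_m$. Summing the single-crossing relation over $j=0,\dots,m-2$ with $i=m-1-j$ collapses the left-hand side to $A_m-\overline{A}_m$ and produces $z\sum_{j=0}^{m-2}A_{m-1-j}\overline{A}_{j+1}$, which is exactly $z\sum_{i=1}^{m-1}A_i\overline{A}_{m-i}$ after the substitution $k=j+1$. This gives the required degree-$m$ identity and hence the theorem.

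The main obstacle is justifying the splitting in the middle step: one must check carefully, from the picture of $A_{i,j}$, that smoothing the chosen crossing really separates the diagram into the two unlinked closures $A_i$ and $\overline{A}_{j+1}$ with the correct string counts and matching orientations, rather than leaving a single connected braid or yielding shifted indices. Since this move only smooths a crossing and introduces no curls, no stray framing factors of $v$ appear, so once the splitting is verified the remaining algebra is routine.
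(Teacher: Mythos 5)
Your proof is correct and is essentially the paper's own argument: your single-crossing relation is exactly lemma~\ref{Aij} (proved the same way, by applying the skein relation at the marked crossing, with the oriented smoothing splitting the closed braid into the nested closures $A_i$ and $\overline{A}_{j+1}$), and your telescoping along $A_{m-1,0}\to A_{m-2,1}\to\cdots\to A_{0,m-1}$ is precisely the paper's lemma~\ref{previousAmbarra}. The concluding comparison of coefficients of $t^m$ in $A(t)\overline{A(t)}$ likewise matches the paper's proof of theorem~\ref{AAbar}.
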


\begin{theorem} \label{AX} \[z\sum_{m=1}^\infty \frac{X_m}{m}t^m=\ln(A(t)).\]
\end{theorem}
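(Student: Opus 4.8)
The plan is to prove the statement by differentiating and reducing everything to a single geometric relation among the intermediate braids $A_{i,j}$. Since $\ln A(t)$ has zero constant term and $A(0)=1$, it suffices to match formal derivatives: I would show that $z\sum_{m\ge1}X_m t^{m-1}=A'(t)/A(t)$ and then integrate. The useful trick is to invoke Theorem~\ref{AAbar}, which gives $A(t)^{-1}=\overline{A(t)}$; this lets me replace the awkward quotient $A'(t)/A(t)$ by the product $A'(t)\,\overline{A(t)}$, so that I never have to expand an inverse and can work entirely with the two explicitly given series $A(t)$ and $\overline{A(t)}$.

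The geometric heart of the argument is a single crossing-switch relation for the braids of Figure~\ref{figAij}. Passing from $A_{i,j}$ to $A_{i-1,j+1}$ switches exactly one positive crossing (the one at the junction of the $i$-block and the $j$-block) to a negative one, so the first Homfly relation, in the form $\sigma_k-\sigma_k^{-1}=z$, gives $A_{i,j}-A_{i-1,j+1}=z\cdot(\text{oriented smoothing})$. I would then identify this smoothing geometrically: resolving the junction crossing disconnects the single closed curve into two, the block of $i$ positive crossings closing up to $A_i$ and the block of $j+1$ negative crossings closing up to $\overline{A}_{j+1}$. This yields the key relation
\[ A_{i,j}-A_{i-1,j+1}=z\,A_i\,\overline{A}_{j+1}\qquad(i\ge1). \]
A reassuring consistency check is that summing this telescopically along an antidiagonal recovers $A_m-\overline{A}_m=z\sum_{k=1}^{m-1}A_k\overline{A}_{m-k}$, which is exactly the order-$t^m$ coefficient of Theorem~\ref{AAbar}.

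With the local relation in hand the rest is formal. Iterating it up to $A_{m-1,0}=A_m$ gives $A_{i,m-1-i}=A_m-z\sum_{k=i+1}^{m-1}A_k\overline{A}_{m-k}$, and summing over the antidiagonal $0\le i\le m-1$ (each $k$ occurring $k$ times) produces the closed form $X_m=mA_m-z\sum_{k=1}^{m-1}kA_k\overline{A}_{m-k}$. Feeding this into the generating function and using $\sum_k kA_k t^k=t A'(t)/z$ together with $\sum_l\overline{A}_l t^l=(1-\overline{A(t)})/z$, the sum collapses to $\sum_{m\ge1}X_m t^{m-1}=\tfrac1z A'(t)-\tfrac1z A'(t)\bigl(1-\overline{A(t)}\bigr)=\tfrac1z A'(t)\,\overline{A(t)}$. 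Multiplying by $z$ and applying Theorem~\ref{AAbar} one last time gives $z\sum_{m\ge1}X_m t^{m-1}=A'(t)/A(t)=\frac{d}{dt}\ln A(t)$, and integrating (both sides vanishing at $t=0$) yields the theorem.

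I expect the one genuinely delicate step to be the geometric identification of the smoothed diagram — verifying that cutting the junction crossing really does split the closed braid into an $A_i$ factor and an $\overline{A}_{j+1}$ factor, with the positive and negative blocks closing up separately. Once that splitting is justified (this is where I would follow the skein-theoretic pictures of \cite{power}), everything downstream is bookkeeping with formal power series, and the only subtlety is the clean use of Theorem~\ref{AAbar} to trade $A(t)^{-1}$ for the mirror series $\overline{A(t)}$.
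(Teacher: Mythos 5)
Your proposal is correct and takes essentially the same route as the paper: your crossing-switch relation is exactly Lemma~\ref{Aij}, your antidiagonal sum $X_m=mA_m-z\sum_{k=1}^{m-1}kA_k\overline{A}_{m-k}$ is Lemma~\ref{previousXm}, and the final step of trading $A'(t)/A(t)$ for $A'(t)\,\overline{A(t)}$ via Theorem~\ref{AAbar} and matching derivatives is precisely the paper's argument. The only difference is cosmetic: you assemble the generating function identity directly from the closed form for $X_m$, whereas the paper reads off the coefficient of $t^{m-1}$ in $A'(t)\,\overline{A(t)}$ and identifies it as $zX_m$.
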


These two theorems are consequences of a simple skein-theoretic lemma, originally used by Aiston in \cite{Aiston}.
 We set
$
A_0=\frac{1}{z}$ and $ \overline{A}_0=-\frac{1}{z}$  
to simplify the notation in the following statements.
\begin{lemma} \label{Aij} For $i \geq 1$ we have   $A_{i,j}=zA_i\overline{A}_{j+1}+A_{i-1,j+1}$.
\end{lemma}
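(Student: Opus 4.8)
The plan is first to read off figure~\ref{figAij} an explicit braid word for $A_{i,j}$. On $m=i+j+1$ strands, set
\[
\beta_{i,j}=\sigma_{m-1}\cdots\sigma_{j+1}\,\sigma_j^{-1}\cdots\sigma_1^{-1},
\]
a block of $i$ positive crossings stacked above a block of $j$ negative crossings, and let $A_{i,j}=\hat\beta_{i,j}$. This matches the boundary cases recorded in the text: for $j=0$ the word is the all-positive $\sigma_{m-1}\cdots\sigma_1$, whose closure is $A_m=A_{m-1,0}$, while for $i=0$ it is $\sigma_{m-1}^{-1}\cdots\sigma_1^{-1}$, whose closure is $\overline{A}_m=A_{0,m-1}$. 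The whole lemma should then follow from a single application of the first Homfly skein relation, so no power-series machinery is needed here.

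Since $i\ge 1$, the word $\beta_{i,j}$ contains at least one positive crossing. I would apply the skein relation in the form $\sigma_{j+1}=\sigma_{j+1}^{-1}+z\,\mathbf{1}$ (positive equals negative plus $z$ times the identity) to the lowest positive crossing $\sigma_{j+1}$, the one sitting immediately above the negative block. In the resulting $\sigma_{j+1}^{-1}$ term the negative block has grown by one crossing and the positive block has shrunk by one, so the word becomes $\sigma_{m-1}\cdots\sigma_{j+2}\,\sigma_{j+1}^{-1}\cdots\sigma_1^{-1}=\beta_{i-1,j+1}$, whose closure is $A_{i-1,j+1}$. This produces the additive term on the right-hand side.

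The smoothing term is the heart of the argument and the step I expect to need the most care. Replacing $\sigma_{j+1}$ by the identity tangle deletes the unique crossing involving both strand groups $\{1,\dots,j+1\}$ and $\{j+2,\dots,m\}$: every surviving positive crossing lies among the top $i$ strands and every surviving negative crossing among the bottom $j+1$ strands. I would confirm, most cleanly at the level of the underlying permutation, that smoothing $\sigma_{j+1}$ splits the single $m$-cycle of $\beta_{i,j}$ into disjoint cycles on these two sets, so that upon closure the diagram becomes two unlinked closed braids lying in disjoint sub-rectangles. Isotoping these into nested sub-annuli realises their product in the commutative algebra $\CC$: the positive block on $i$ strands closes to $A_i$ and the negative block on $j+1$ strands closes to $\overline{A}_{j+1}$, contributing $z\,A_i\overline{A}_{j+1}$. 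Adding the two terms gives $A_{i,j}=zA_i\overline{A}_{j+1}+A_{i-1,j+1}$. A final point worth checking is that all strands run coherently around the annulus, so that the oriented smoothing really is the trivial tangle and the relation applies verbatim; the conventions $A_0=\tfrac1z$, $\overline{A}_0=-\tfrac1z$ play no role for $i\ge 1$ and enter only later when this recursion is summed into the power-series identities of Theorems~\ref{AAbar} and~\ref{AX}.
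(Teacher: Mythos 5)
Your proof is correct and is essentially the paper's own argument: the paper likewise applies the quadratic (skein) relation at the single marked crossing where the positive and negative blocks meet, the crossing switch giving $A_{i-1,j+1}$ and the oriented smoothing splitting the closure into nested closed braids whose product in the commutative algebra is $A_i\overline{A}_{j+1}$. Your write-up simply spells out in braid-word and permutation detail what the paper conveys by the marked-crossing picture.
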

\begin{proof} Use the quadratic relation in the skein at the marked crossing to get
\bc
\labellist\small
\pinlabel {$j$ strings} at 210 -25
\pinlabel {$i$ strings} at 500 -25
\endlabellist
$A_{i,j}\quad = \quad \AijMarcado\quad = \quad zA_i\overline{A}_{j+1}+A_{i-1,j+1}.$
\ec
\end{proof}

\begin{lemma} \label{previousAmbarra} For $m\geq 1$ we have 
$
\overline{A}_m=-z\sum_{k=1}^mA_k\overline{A}_{m-k}
$.
\end{lemma}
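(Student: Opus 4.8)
The plan is to prove the formula for $\overline{A}_m$ by iterating Lemma~\ref{Aij} and using the boundary values $A_m = A_{m-1,0}$ and $\overline{A}_m = A_{0,m-1}$. The key observation is that Lemma~\ref{Aij} lets us peel off one factor of $zA_i\overline{A}_{j+1}$ while decreasing the first index and increasing the second, keeping the sum $i+j$ (and hence the total number of strings $i+j+1$) constant. So starting from $A_m = A_{m-1,0}$, I would repeatedly apply the lemma to the $A_{i-1,j+1}$ term until the first index reaches $0$, arriving at $A_{0,m-1} = \overline{A}_m$.

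Concretely, I would write out the telescoping. Applying Lemma~\ref{Aij} with $i = m-1, m-2, \dots, 1$ in turn gives
\[
A_{m-1,0} = z\sum_{k=1}^{m-1} A_k \overline{A}_{m-k} + A_{0,m-1},
\]
where the summand at step $i$ contributes $zA_i\overline{A}_{(j+1)} = zA_i\overline{A}_{m-i}$ (since at that step $i+j+1 = m$, so $j+1 = m-i$). Recognising $A_{m-1,0} = A_m$ and $A_{0,m-1} = \overline{A}_m$, this rearranges to
\[
\overline{A}_m = A_m - z\sum_{k=1}^{m-1} A_k \overline{A}_{m-k}.
\]

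To match the stated form exactly, I would fold $A_m$ into the sum using the convention $\overline{A}_0 = -1/z$, so that the $k=m$ term is $zA_m\overline{A}_0 = -A_m$; bringing this into the sum converts $A_m - z\sum_{k=1}^{m-1}$ into $-z\sum_{k=1}^{m}$, yielding precisely $\overline{A}_m = -z\sum_{k=1}^m A_k\overline{A}_{m-k}$. Alternatively one can present the whole argument as a clean induction on $m$, with the inductive step supplied by a single application of Lemma~\ref{Aij} together with the inductive formula for $A_{i-1,j+1}$.

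I do not expect a serious obstacle here, since the mechanism is just a finite telescoping of an already-established skein identity. The only point demanding care is bookkeeping the indices: verifying that at the $i$-th application the factor is genuinely $\overline{A}_{m-i}$ and that the boundary term after the last step is exactly $A_{0,m-1}=\overline{A}_m$ rather than something with a leftover $A_0$. Confirming the index alignment and the clean use of the convention $\overline{A}_0=-1/z$ to absorb the $A_m$ term is the step I would check most carefully.
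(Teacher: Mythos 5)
Your proof is correct and follows essentially the same route as the paper: both telescope Lemma~\ref{Aij} from $A_m=A_{m-1,0}$ down to $A_{0,m-1}=\overline{A}_m$, obtaining $A_m=z\sum_{k=1}^{m-1}A_k\overline{A}_{m-k}+\overline{A}_m$. The only difference is the final step --- the paper absorbs the $\overline{A}_m$ term into the sum via $A_0=1/z$ and then applies the mirror map, whereas you solve directly for $\overline{A}_m$ and absorb $A_m$ via $\overline{A}_0=-1/z$; both are valid, and yours avoids invoking the mirror map altogether.
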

\begin{proof}  Repeated use of lemma \ref{Aij} gives
\begin{eqnarray*}
  A_m & = & A_{m-1,0} \\
  \   & = & zA_{m-1}\overline{A}_1+A_{m-2,1} \\
  \   & = & zA_{m-1}\overline{A}_1+zA_{m-2}\overline{A}_2+A_{m-3,2} \\
  \   & \vdots & \ \\
  \   & = & zA_{m-1}\overline{A}_1+zA_{m-2}\overline{A}_2+ \dots + zA_1\overline{A}_{m-1}+A_{0,m-1}.
  \   \end{eqnarray*} \label{eqnAij}

The last equation can be written
\[
A_m=z\sum_{k=1}^{m-1}A_{m-k}\overline{A}_k+\overline{A}_m
=z\sum_{k=1}^{m}\overline{A}_kA_{m-k}.
\]

Now apply the mirror map.
\end{proof}
 
\begin{proof}[of theorem \ref{AAbar}] 
The coefficient of $t^m$ in $A(t)\overline{A(t)}$ is
\begin{eqnarray*} &&
-z\overline{A}_m-z^2A_1\overline{A}_{m-1}-z^2A_2\overline{A}_{m-2}-\ldots -z^2A_{m-1}\overline{A}_1+zA_m\\
&=&z\left(  A_m-\overline{A}_m-z\sum_{k=1}^{m-1}A_k\overline{A}_{m-k}\right).
\end{eqnarray*}
This is $0$
for $m\ge1$ by lemma~\ref{previousAmbarra}, while the constant term is $1$.
\end{proof}

\begin{lemma} \label{previousXm}For $m\ge1$ we have  $X_m=-z\displaystyle \sum_{j=1}^{m}jA_j\overline{A}_{m-j}.
$
\end{lemma}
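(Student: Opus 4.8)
The plan is to expand each intermediate braid occurring in $X_m=\sum_{j=0}^{m-1}A_{m-1-j,j}$ completely by iterating Lemma~\ref{Aij}, to add up the results, and then to repackage them using Lemma~\ref{previousAmbarra}.

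First I would apply Lemma~\ref{Aij} repeatedly to lower the first index all the way to $0$. Since each application preserves the quantity $i+j$ and terminates at $A_{0,n}=\overline{A}_{n+1}$, this produces, for $i\ge 1$,
\[
A_{i,j}=z\sum_{k=1}^{i}A_k\overline{A}_{\,i+j+1-k}+\overline{A}_{\,i+j+1}.
\]
Every summand of $X_m$ has $i+j=m-1$, so substituting $i=m-1-j$ gives
\[
A_{m-1-j,j}=z\sum_{k=1}^{m-1-j}A_k\overline{A}_{m-k}+\overline{A}_m .
\]

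Next I would sum over $j=0,\dots,m-1$ and interchange the order of summation. The constant terms contribute $m\overline{A}_m$, and for each fixed $k$ the product $A_k\overline{A}_{m-k}$ arises from exactly the $m-k$ indices $j=0,\dots,m-1-k$, so that
\[
X_m=m\overline{A}_m+z\sum_{k=1}^{m-1}(m-k)\,A_k\overline{A}_{m-k}.
\]
The final and decisive step is to turn the weight $m-k$ into the weight $k$ demanded by the statement. Splitting the coefficient as $(m-k)A_k\overline{A}_{m-k}=mA_k\overline{A}_{m-k}-kA_k\overline{A}_{m-k}$ and invoking Lemma~\ref{previousAmbarra} in the form $z\sum_{k=1}^{m-1}A_k\overline{A}_{m-k}=A_m-\overline{A}_m$ on the first piece collapses the $\overline{A}_m$ contributions and leaves
\[
X_m=mA_m-z\sum_{k=1}^{m-1}k\,A_k\overline{A}_{m-k}.
\]
Using the convention $\overline{A}_0=-1/z$, the leading term is $mA_m=-z\,m\,A_m\overline{A}_0$, the missing $j=m$ summand, and reabsorbing it yields the asserted $X_m=-z\sum_{j=1}^{m}jA_j\overline{A}_{m-j}$.

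I expect the main obstacle to be the bookkeeping in the interchange of summation that generates the coefficient $m-k$, together with recognising that Lemma~\ref{previousAmbarra} is exactly the identity that converts $m-k$ into $k$; once that substitution is spotted the computation is routine, and the conventions $A_0=1/z$, $\overline{A}_0=-1/z$ dispose of the endpoint terms $A_m$ and $\overline{A}_m$ cleanly.
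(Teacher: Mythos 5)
Your proof is correct: the iterated form of lemma~\ref{Aij}, the interchange of summation producing the weight $m-k$, the conversion via lemma~\ref{previousAmbarra}, and the endpoint conventions $A_0=1/z$, $\overline{A}_0=-1/z$ all check out (your closed-form expansion of $A_{i,j}$, stated for $i\ge 1$, also holds trivially for $i=0$, which is needed for the $j=m-1$ summand). The argument is the same double count as the paper's, but organised in the opposite direction, and the difference is worth noting. The paper sums the $m-1$ \emph{partial} expansions of $A_m$ displayed in the proof of lemma~\ref{previousAmbarra}, each truncated at a different intermediate braid $A_{m-1-j,j}$; since the term $zA_j\overline{A}_{m-j}$ survives in exactly $j$ of those equations, the desired weight $j$ appears immediately from the multiplicity count, and adding the trivial equation $A_m=A_{m-1,0}$ gives $mA_m=z\sum_{j=1}^{m-1}jA_j\overline{A}_{m-j}+X_m$ with no further input beyond lemma~\ref{Aij}. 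You instead expand every summand of $X_m$ \emph{completely} down to $\overline{A}_m$, which produces the complementary weight $m-k$, and you must then spend one extra application of lemma~\ref{previousAmbarra}, in the form $z\sum_{k=1}^{m-1}A_k\overline{A}_{m-k}=A_m-\overline{A}_m$, to trade $m\overline{A}_m$ plus the $m$-weighted sum for $mA_m$. The paper's truncation trick buys economy (the coefficient $j$ comes for free, and the statement of lemma~\ref{previousAmbarra} is never invoked); your version buys transparency, since each closed braid $A_{m-1-j,j}$ gets an explicit closed-form expression, at the cost of the correction step. Both proofs finish identically, absorbing $mA_m$ as the $j=m$ term via $\overline{A}_0=-1/z$.
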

\begin{proof} Sum all the equations   except the first in the proof of lemma \ref{previousAmbarra} to get \[
(m-1)A_m=z((m-1)A_{m-1}\overline{A}_1+(m-2)A_{m-2}\overline{A}_2+...+A_1\overline{A}_{m-1})+\sum_{j=1}^{m-1}A_{m-1-j,j}.
\]
Since $A_m=A_{m-1,0}$ it follows that \[mA_m=z\sum_{j=1}^{m-1}jA_j\overline{A}_{m-j}+X_m.\]
\end{proof}

\begin{proof}[of theorem \ref{AX}]
Since the constant terms in the two series are equal, it is enough to show that their derivatives are equal.

Now \[\frac{d}{dt}\left(z\sum \frac{X_m}{m}t^m\right)=z\sum X_m t^{m-1}\] and 
\[\frac{d}{dt}\left(\ln(A(t))\right)=\frac{A'(t)}{A(t)}=A'(t)\overline{A(t)}\] by theorem \ref{AAbar}.
The coefficient of $t^{m-1}$ in  $A'(t)\overline{A(t)}$ is \[-z^2\displaystyle\sum_{j=1}^m jA_j\overline{A}_{m-j}=zX_m\] by lemma \ref{previousXm}, for all $m\ge 1$, and so the two series are equal.
\end{proof}

We shall use the power series relations to give expressions for $\overline{A}_m$ and $X_m$ in terms of the Turaev basis $\{A_\gl\}_{\gl\vdash m}$ for $\CC_m$. The first of these depends on the general expression for the coefficients $\{d_m\}$ of the inverse, $1+\sum d_m t^m$,  of a formal power series $1+\sum c_n t^n$,  in terms of monomials in the coefficients $\{c_n\}$, while the second, which can be deduced quickly from the first, gives the coefficients of the logarithm of a formal power series. Both of these results can be found by applying the technique given in \cite{Macdonald} (example 11, page 30) for finding the coefficients of the resulting power series when one power series is substituted in another.   

When discussing monomials in the coefficients $\{c_n\}$ it is helpful to distinguish between {\em ordered} monomials, $c_{r_1}c_{r_2}\ldots c_{r_l}$, and the corresponding standard monomial $c_{\gl_1}c_{\gl_2}\ldots c_{\gl_l}$, where the sequence $(r_1, r_2, \dots, r_l)$ is rearranged into descending order $(\gl_1, \gl_2, \dots, \gl_l)$.  The standard monomial can then be described as $c_\gl$ where $\gl$ is the partition of $m=\sum \gl_i$ having $l$ parts $\gl_1\ge\gl_2\ge \dots \ge \gl_l>0$.  We write $k_\gl$ for the number of ordered monomials with standard monomial $c_\gl$, or equally the number of rearrangements of the sequence $(\gl_1, \gl_2, \dots, \gl_l)$.  

The coefficient $d_m$ of $t^m$ in the inverse series is the sum over partitions $\gl\vdash m$  of $(-1)^l k_\gl c_\gl$.

\begin{theorem} \label{Ambarra} For $m\geq 1$ we have that
\[
\overline{A}_m=\sum_{\lambda \vdash m}k_{\lambda }(-z)^{l-1}A_{\lambda}.
\]
\end{theorem}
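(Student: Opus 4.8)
The plan is to read the theorem off directly from the power-series identity of theorem~\ref{AAbar}. That identity states $A(t)\overline{A(t)}=1$, so $\overline{A(t)}$ is precisely the multiplicative inverse of the formal power series $A(t)=1+z\sum_{i\geq 1}A_i t^i$. I can therefore apply the general inversion formula recorded just above the statement, which gives the coefficient of $t^m$ in the inverse of a series $1+\sum_{n\geq 1}c_n t^n$ as $\sum_{\lambda\vdash m}(-1)^l k_\lambda c_\lambda$, where $l$ is the number of parts of $\lambda$.

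First I would put $A(t)$ into the required normalised form by setting $c_n=zA_n$, so that $A(t)=1+\sum_{n\geq 1}c_n t^n$. For a partition $\lambda\vdash m$ with $l$ parts the standard monomial is then
\[
c_\lambda=c_{\lambda_1}\cdots c_{\lambda_l}=z^l A_{\lambda_1}\cdots A_{\lambda_l}=z^l A_\lambda,
\]
using commutativity of $\CC_+$ and the definition $A_\lambda=A_{\lambda_1}\cdots A_{\lambda_l}$. Substituting into the inversion formula, the coefficient $d_m$ of $t^m$ in $A(t)^{-1}=\overline{A(t)}$ becomes
\[
d_m=\sum_{\lambda\vdash m}(-1)^l k_\lambda\, z^l A_\lambda=\sum_{\lambda\vdash m}k_\lambda(-z)^l A_\lambda.
\]

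Finally I would match this against the explicit expression $\overline{A(t)}=1-z\sum_{i\geq 1}\overline{A}_i t^i$, whose coefficient of $t^m$ (for $m\geq 1$) is $-z\overline{A}_m$. Equating the two expressions for $d_m$ and dividing through by $-z$ yields $\overline{A}_m=\sum_{\lambda\vdash m}k_\lambda(-z)^{l-1}A_\lambda$, as required. There is no serious obstacle here once theorem~\ref{AAbar} and the inversion formula are in hand; the only thing to watch carefully is the bookkeeping of the powers of $z$ and the signs, since $A(t)$ carries coefficients $+zA_i$ while $\overline{A(t)}$ carries $-z\overline{A}_i$. The single surplus factor of $-z$ between $d_m$ and $\overline{A}_m$ is exactly what reduces the exponent from $l$ to $l-1$, and matching it correctly is the one place where an error could creep in.
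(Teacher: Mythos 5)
Your proof is correct and follows exactly the paper's own argument: the paper likewise deduces the theorem immediately from theorem~\ref{AAbar} together with the inversion formula $d_m=\sum_{\lambda\vdash m}(-1)^l k_\lambda c_\lambda$ stated just before it. Your careful tracking of the substitution $c_n=zA_n$ and the matching of $-z\overline{A}_m$ against $\sum_{\lambda\vdash m}k_\lambda(-z)^l A_\lambda$ simply makes explicit the bookkeeping the paper leaves to the reader.
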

 \begin{proof} This follows at once from theorem \ref{AAbar} and the formula for the inverse series.
\end{proof}

There is a simple combinatorial formula for $k_\gl$ as a multinomial coefficient, in terms of the multiplicities of the parts of $\gl$. If $\gl$ has  
$r${ \em distinct} parts repeated $m_1,\ldots, m_r$ times respectively, making a total of $l=m_1+\cdots+m_r$ parts altogether, there are ${l\choose{m_1,\ldots, m_r}}=\ds\frac{l!}{m_1!\cdots m_r!}=k_\gl$ possible rearrangements.

\begin{example}
The partition $\lambda =(4,4,4,2,2,1,1,1)$ with length $l=8$ has three distinct parts with multiplicities $m_1=3, m _2=2$ and $ m_3=3$, hence $k_{\lambda }=\frac{8!}{3! \ 2! \ 3!}=560$. It follows from theorem \ref{Ambarra} that the coefficient of $A_4^3A_2^2A_1^3$ in  $\overline{A}_{19}$ is $-560z^7$.
\end{example}

\begin{theorem} \label{Xm} $X_m$ is given in terms of the monomials $\{ A_{\lambda} \} _{\lambda \vdash m}$ by the formula
\[
X_m=m\sum_{\lambda \vdash m}\frac{k_{\lambda }}{l}(-z)^{l-1}A_{\lambda }.
\]
\end{theorem}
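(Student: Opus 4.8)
The plan is to obtain $X_m$ by reading off the coefficient of $t^m$ on both sides of the identity in theorem~\ref{AX}. On the left this coefficient is simply $zX_m/m$, so everything reduces to expanding $\ln(A(t))$ as a power series whose coefficients are monomials in the $A_i$, exactly as theorem~\ref{Ambarra} did for the inverse series $\overline{A(t)}$ using theorem~\ref{AAbar}.

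First I would record the logarithm analogue of the inverse-series formula already quoted in the text. Writing $A(t)=1+\sum_{n\ge1}c_nt^n$ with $c_n=zA_n$ and setting $u=\sum_{n\ge1}c_nt^n$, I expand $\ln(1+u)=\sum_{k\ge1}\frac{(-1)^{k-1}}{k}u^k$. The coefficient of $t^m$ in $u^k$ is the sum of all ordered monomials $c_{r_1}\cdots c_{r_k}$ with $r_1+\cdots+r_k=m$ and each $r_i\ge1$; collecting those with a given standard monomial $c_\gl$ (so that $\gl\vdash m$ has length $l=k$) produces exactly $k_\gl$ copies of $c_\gl$, by the definition of $k_\gl$. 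Summing over $k$ therefore gives
\[
[t^m]\,\ln(A(t))=\sum_{\gl\vdash m}\frac{(-1)^{l-1}}{l}\,k_\gl\,c_\gl .
\]
This is the same formal substitution technique that yields the inverse-series coefficients $\sum_{\gl}(-1)^{l}k_\gl c_\gl$ used just above, so it can be quoted or reproduced in a single line.

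The remaining step is bookkeeping. Since $c_n=zA_n$, the standard monomial factors as $c_\gl=z^lA_\gl$, so the coefficient of $t^m$ in $\ln(A(t))$ equals $\sum_{\gl\vdash m}\frac{(-1)^{l-1}}{l}k_\gl z^l A_\gl$. Equating this with $zX_m/m$ from theorem~\ref{AX}, cancelling one factor of $z$, multiplying by $m$, and combining $(-1)^{l-1}z^{l-1}=(-z)^{l-1}$ gives
\[
X_m=m\sum_{\gl\vdash m}\frac{k_\gl}{l}(-z)^{l-1}A_\gl,
\]
as claimed. I do not anticipate a genuine obstacle: the only points requiring care are the passage from ordered monomials (compositions of $m$) to standard monomials (partitions of $m$), which is precisely what the multiplicity $k_\gl$ records, and the careful tracking of the powers of $z$ so that the leftover factor of $z$ from theorem~\ref{AX} is absorbed correctly into $(-z)^{l-1}$.
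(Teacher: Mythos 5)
Your proof is correct, but it takes a genuinely different route from the paper's. You extract the coefficient of $t^m$ in $\ln(A(t))$ directly, expanding $\ln(1+u)=\sum_{k\ge1}\frac{(-1)^{k-1}}{k}u^k$ with $u=z\sum_{i\ge1} A_it^i$ and grouping the compositions $r_1+\cdots+r_k=m$ into partitions of length $l=k$ with multiplicity $k_\lambda$; this is precisely the Macdonald substitution technique (example 11, page 30) that the paper cites in the preamble to theorems~\ref{Ambarra} and~\ref{Xm} but does not actually use in its proof. The paper instead differentiates the identity of theorem~\ref{AX} with respect to $z$, treating each $A_i$ as a constant: by theorem~\ref{AAbar} it gets $\frac{d}{dz}\bigl(\ln(A(t))\bigr)=\frac{1-\overline{A(t)}}{z}=\sum_{m\ge1}\overline{A}_mt^m$, hence $\frac{d}{dz}(zX_m/m)=\overline{A}_m$; it then substitutes the already-proved formula of theorem~\ref{Ambarra} for $\overline{A}_m$ and integrates back in $z$. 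What your route buys: it is self-contained, needing only theorem~\ref{AX} plus the elementary passage from compositions to partitions, and it avoids the mildly delicate integration step, in which one must regard the coefficients of $X_m$ in the basis $\{A_\lambda\}$ as polynomials in $z$ vanishing at $z=0$ in order to fix the constant of integration. What the paper's route buys: it recycles theorem~\ref{Ambarra} instead of performing a second series expansion, and it exhibits the structural identity $\frac{d}{dz}(zX_m/m)=\overline{A}_m$ relating $X_m$ to $\overline{A}_m$, which is of some independent interest.
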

\begin{proof} Differentiate $\ln (A(t))$ with respect to $z$, treating each $A_i$ as constant.

By theorem \ref{AAbar} 
\begin{eqnarray*}\frac{d}{dz}\left(\ln(A(t))\right)&=&\frac{d}{dz}(A(t))/A(t)=\frac{d}{dz}(A(t))\x\overline{A(t)}\\&=&\frac{A(t)-1}{z}\x\overline{A(t)}=\frac{1-\overline{A(t)}}{z}\\
&=&\sum_{m=1}^\infty\overline{A}_m t^m.
\end{eqnarray*}

By theorems \ref{AX} and \ref{Ambarra},  we have \[\frac{d}{dz}(zX_m/m)=\overline{A}_m=\sum_{\gl\vdash m} k_\gl (-z)^{l-1}A_\gl.\]

Integrating the right hand side gives \[\frac{zX_m}{m}= \sum_{\gl\vdash m}k_\gl (-1)^{l-1}\frac{z^l}{l}A_\gl,\] and the theorem follows.
\end{proof}
\begin{example}
For the partition $\lambda =(3,3,1,1,1)$ we have $m=9, l=5, m _1=2$ and $m_2=3$. Then $k_{\lambda }=\frac{5!}{2! \ 3!}=10$ and the coefficient of $A_3^2A_1^3$ in $X_9$ is
\[
m\frac{k_{\lambda }}{l}(-z)^{l-1}= 9\frac{10}{5}(-z)^{5-1}=18z^4.
\]
\end{example}

\section{Symmetric functions} \label{symmetric}

The element $h_m \in {\cal C}_m$, which is taken to represent  the complete symmetric function of degree $m$, is the closure of the element $\frac{1}{\alpha _m}a_m \in H_m$ where $a_m=\sum _{\pi \in S_m}s^{l(\pi )}\omega _{\pi}$ is one of the two basic quasi-idempotent elements of $H_m$. Here $\omega _{\pi}$ is the positive permutation braid associated to the permutation $\pi \in S_m$ with length $l(\pi )$ and $\alpha_m$ is given by the equation $a_ma_m=\alpha _ma_m$ \cite{{Sascha},{AistonMorton},{skein}}.  Using the other quasi-idempotent $b_m=\sum _{\pi \in S_m}(-s)^{-l(\pi )}\omega _{\pi}$ in a similar way determines the element $e_m$ which represents the elementary symmetric function.  These elements are related by the power series equation $H(t)E(-t)=1$, where $H(t)=1+\sum h_n t^n$ and $E(t)=1+\sum e_n t^n$. The involution on the skein $\CC$ induced by sending each diagram to itself and altering the coefficients by fixing $v$ and interchanging $s$ with $-s^{-1}$ will interchange $h_m$ and $e_m$.  

The subalgebra $\CC_+$ of $\CC$ is generated as an algebra by $\{h_n\}$, and the monomials $h_\gl$ of weight $m$, where $\gl\vdash m$, form a basis for $\CC_m$, allowing $\CC_+$ to be interpreted as the ring of symmetric functions in variables $x_1,\ldots,x_N,\ldots$ with coefficients in $\Lambda$. In this interpretation $\CC_m$ consists of the homogeneous functions of degree $m$.

The power sums $P_m =\sum x_i^m$ are symmetric functions which can be written in terms of the complete symmetric functions by Newton's power sum relation $\ln H(t)=\sum_{m}\frac{P_m}{m}t^m$. This equation defines $P_m$ as an element of the skein $\CC_m$.  The element $P_m$ is used in \cite{skein} for describing the $m$th power sum of the Murphy operators in $H_n$, independently of $n$. It is shown in \cite{power} that the more geometric element $X_m$ in figure \ref{figXm} is a scalar multiple of $P_m$, given explicitly as  $X_m=[m]P_m$, where $[m]$ is the quantum integer $\ds\frac{s^m-s^{-m}}{s-s^{-1}}$.

Theorem \ref{Xm} gives the immediate expression
\be
P_m=\frac{m}{[m]}\sum_{\lambda \vdash m}\frac{k_{\lambda }}{l}(-z)^{l-1}A_{\lambda } \label{eqnPA}
\ee 
for $P_m$ in Turaev's basis.

The complete symmetric functions $\{h_n\}$ themselves are shown in theorem~3.6 of \cite{skein} to be related to Turaev's closed braids $\{A_m\}$
by the equation
\be
A(t)=\frac{H(st)}{H(s^{-1}t)}. \label{eqnAH}
\ee

We now derive an expression for $h_m$ in terms of Turaev's basis. We had hoped for a more illuminating way to display the coefficient of $A_\gl$ in terms of the partition $\gl\vdash m$, but we do have an explicit rational function in $\Lambda$ whose numerator may be able to be reorganised better in some given cases.

\subsection{The complete symmetric functions $h_m$} 
Equation (\ref{eqnAH}) can be written in the form $H(s^{-1}t)A(t)-H(st)=0$. Equivalently
\[
\left( \sum_{n=0}^{\infty }s^{-n}h_nt^n\right) \left( 1+z\sum _{m=1}^{\infty }A_mt^m\right) -\sum_{n=0}^{\infty }s^nh_nt^n=0.
\]
Considering the coefficient of $t^m$ we obtain the equation
\[
s^{-m}h_m+z\sum _{j=0}^{m-1}s^{-j}h_jA_{m-j}-s^mh_m=0,
\]
hence
\[
[m]h_m=\sum _{j=0}^{m-1}s^{-j}h_jA_{m-j}.
\]

For a partition $\lambda =(\lambda _1 , \dots , \lambda _l)$ of $m$, we will write $l_i$ for the multiplicity of the part $\lambda _i$ in $\lambda$. We will also write $\lambda -\lambda _i$ for the partition $(\dots ,\lambda _{i-1}, \lambda _{i+1}, \dots )\vdash m-\lambda _i$ with length $l-1$. 
\begin{theorem} \label{HfromA} The complete symmetric functions can be written as
\[
h_m=\sum_{\lambda \vdash m }\theta _{\lambda } \ A_{\lambda },
\]
where $\theta _{\lambda }$ is given recursively by the formula
\[
\theta _{\lambda }=\frac{1}{s^m[m]}\ \sum_{i=1}^l\frac{s^{\lambda _i}}{l_i}\ \theta _{\lambda -\lambda _i}
\]
and $\theta _{\emptyset}=1$, where $\emptyset$ denotes the empty partition.
\end{theorem}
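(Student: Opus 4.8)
The plan is to prove the closed formula for $h_m$ by induction on $m$, using the recurrence $[m]h_m=\sum_{j=0}^{m-1}s^{-j}h_jA_{m-j}$ already established just above the statement. The base case $m=0$ is $\theta_\emptyset=1$, matching $h_0=1$. For the inductive step, I would substitute the inductive hypothesis $h_j=\sum_{\mu\vdash j}\theta_\mu A_\mu$ into the recurrence and identify the resulting coefficient of each monomial $A_\lambda$ with $\theta_\lambda$.

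\medskip

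\noindent
First I would expand the right-hand side as
\[
[m]h_m=\sum_{j=0}^{m-1}s^{-j}\left(\sum_{\mu\vdash j}\theta_\mu A_\mu\right)A_{m-j}.
\]
Fix a partition $\lambda\vdash m$. A given standard monomial $A_\lambda$ arises on the right whenever we choose one part $\lambda_i$ of $\lambda$ to play the role of the factor $A_{m-j}$ (so $j=m-\lambda_i$), with the remaining partition being $\mu=\lambda-\lambda_i\vdash j$. Each distinct value among the parts of $\lambda$ gives rise to one such decomposition, contributing $s^{-(m-\lambda_i)}\theta_{\lambda-\lambda_i}=s^{\lambda_i-m}\theta_{\lambda-\lambda_i}$. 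Collecting these, the coefficient of $A_\lambda$ in $[m]h_m$ is $\sum_{i} s^{\lambda_i-m}\theta_{\lambda-\lambda_i}$, where the sum runs over distinct parts, and dividing by $[m]$ and pulling out $s^{-m}$ yields exactly the stated recurrence for $\theta_\lambda$.

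\medskip

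\noindent
The main obstacle, and the step I would treat most carefully, is the bookkeeping of the multiplicity factor $1/l_i$. Writing the sum $\sum_{i=1}^l$ over all $l$ parts of $\lambda$ (rather than over distinct values) overcounts each genuine decomposition $\lambda=\mu\cup\{\lambda_i\}$ by exactly the number of parts of $\lambda$ equal to $\lambda_i$, namely $l_i$; the factor $1/l_i$ in the theorem compensates precisely for this overcounting. I would verify this identification explicitly: removing any one of the $l_i$ equal parts of value $\lambda_i$ produces the same partition $\lambda-\lambda_i$ and the same term $s^{\lambda_i}\theta_{\lambda-\lambda_i}$, so summing over all $l$ indices and dividing each term by its multiplicity $l_i$ recovers the correct sum over distinct decompositions. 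Once this combinatorial matching is confirmed, the formula for $\theta_\lambda$ follows, completing the induction.
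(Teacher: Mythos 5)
Your proposal is correct and follows essentially the same route as the paper: induction on $m$ using the recurrence $[m]h_m=\sum_{j=0}^{m-1}s^{-j}h_jA_{m-j}$, substitution of the inductive hypothesis, and collection of the coefficient of each $A_\lambda$, with the factor $1/l_i$ accounting for the $l_i$ equal parts that yield the same decomposition. Your explicit verification of the multiplicity bookkeeping is exactly the step the paper leaves implicit in its final displayed line, so there is nothing to add.
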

\begin{proof} We prove the theorem by induction on $m$. For $m=1$ we have that $\lambda =(1)$ is the only partition of $1$ and $\theta _{(1)}=1$, hence the formula just says that $h_1=A_1$.  
Assume the theorem for $1, \dots , m-1$. Then
\begin{eqnarray*}
[m]h_m &=& \sum _{j=0}^{m-1}s^{-j}h_jA_{m-j} \\
\ &=& \sum _{j=0}^{m-1}s^{-j}A_{m-j}\left( \sum _{\mu \vdash j}\theta _{\mu } \ A_{\mu _1}\cdots A_{\mu _l}\right) \ \ \ \ \mbox{(by induction)} \\
\ &=& \sum _{\lambda \vdash m}\left( \sum _{i=1}^l \frac{s^{-(m-\lambda _i)}}{l_i} \ \theta _{\lambda -\lambda _i}\right) A_{\lambda _1}\cdots A_{\lambda _l},
\end{eqnarray*} 
and $\theta _{\lambda }$ is obviously the expression in brackets divided by $[m]$.
\end{proof} 
\begin{remark} The coefficient of $A_1^m$ in $h_m$ is $\theta _{(1,\stackrel{m}{\dots}, 1)}=\frac{1}{\alpha _m}$, where $\alpha _m=s^{\frac{m(m-1)}{2}}[m]!$ satisfies the equation $a_m^2=\alpha _ma_m$.  Also the coefficient of $A_m$ in $h_m$ is $\theta_{(m)}=\frac{1}{[m]}$. \end{remark} 
 
We now provide a non-recursive formula for the coefficients $\{\theta_\gl\}$ of theorem~\ref{HfromA}. First, we   introduce some notation: if $\mu =(\mu _1, \dots , \mu _l)$ is a (not necessarily decreasing) finite sequence of integers $\mu _j >0$, we define the coefficient
\[
c_{\mu }= 
\prod _{i=1}^l\frac{1}{[\mu _1+\dots +\mu _i]s^{\mu _1+ \dots +\mu _i}}.
\]
If $\lambda =(\lambda _1, \dots , \lambda _l)$ is a partition with length $l$ and $\alpha \in S_l$, the set of permutations of $\{ 1,2,\dots ,l\}$, we will write $\lambda _{\alpha }$ for the finite sequence $\lambda _{\alpha }=(\lambda _{\alpha(1)}, \dots , \lambda _{\alpha (l)})$.

\begin{lemma} \label{theta} For a partition $\lambda $ of $m$ with length $l$ the coefficient $\theta_\gl$ can be written as
\[
\theta _{\lambda } = \frac{k_{\lambda }}{l!}s^m\sum _{\alpha \in S_l}c_{\lambda _{\alpha}}.
\]
\end{lemma}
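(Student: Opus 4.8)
The plan is to check that the proposed closed form satisfies the very recursion and initial condition that define $\theta_\lambda$ in theorem~\ref{HfromA}, and then conclude by induction on $m$. Write $\Theta_\lambda=\frac{k_\lambda}{l!}s^m\sum_{\alpha\in S_l}c_{\lambda_\alpha}$ for the candidate expression. For the base case, with $l=0$ we have $k_\emptyset=1$, the sum over $S_0$ is the single empty permutation contributing an empty product, so $\Theta_\emptyset=1=\theta_\emptyset$. It then suffices to show that $\Theta_\lambda=\frac{1}{s^m[m]}\sum_{i=1}^l\frac{s^{\lambda_i}}{l_i}\Theta_{\lambda-\lambda_i}$.

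The key observation is that in $c_\mu=\prod_{i=1}^l\frac{1}{[\mu_1+\cdots+\mu_i]s^{\mu_1+\cdots+\mu_i}}$ the factor indexed by $i=l$ is always $\frac{1}{[m]s^m}$, since the final partial sum equals $m$ regardless of how $\mu$ is ordered. Hence, writing $\mu'$ for $\mu$ with its last entry deleted, $c_\mu=\frac{1}{[m]s^m}\,c_{\mu'}$. I would then split the sum over $\alpha\in S_l$ according to the index $j=\alpha(l)$ placed last: as $\alpha$ ranges over the permutations with $\alpha(l)=j$ fixed, the truncated sequence $(\lambda_{\alpha(1)},\dots,\lambda_{\alpha(l-1)})$ runs exactly over the rearrangements of the length-$(l-1)$ partition $\lambda-\lambda_j$. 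This yields $\sum_{\alpha\in S_l}c_{\lambda_\alpha}=\frac{1}{[m]s^m}\sum_{j=1}^l\sum_{\beta\in S_{l-1}}c_{(\lambda-\lambda_j)_\beta}$, and each inner sum I rewrite back in terms of $\Theta_{\lambda-\lambda_j}$ using its definition.

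Substituting this and collecting the powers of $s$ reduces the required recursion to the purely combinatorial identity $k_\lambda\,l_j=l\,k_{\lambda-\lambda_j}$ for each part $\lambda_j$. This follows at once from the multinomial description $k_\lambda=l!/\prod_v(l_v!)$: deleting one copy of the part $\lambda_j$, of multiplicity $l_j$, replaces $l!$ by $(l-1)!$ and $l_j!$ by $(l_j-1)!$, producing exactly the ratio $l/l_j$. Matching coefficients term by term then converts the reduced sum over indices $j$ into the weighted sum $\frac{1}{s^m[m]}\sum_i\frac{s^{\lambda_i}}{l_i}\Theta_{\lambda-\lambda_i}$ of the recursion.

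I expect the main obstacle to be the careful bookkeeping with repeated parts. The sum $\sum_{\alpha\in S_l}c_{\lambda_\alpha}$ counts each rearrangement of $(\lambda_1,\dots,\lambda_l)$ with multiplicity, and one must track how these multiplicities interact with the multinomial $k_\lambda$ and with the reduction to $\lambda-\lambda_j$, so that after summing over all indices $j$ sharing a common value the per-part factors $1/l_j$ emerge correctly. Once the identity $k_\lambda\,l_j=l\,k_{\lambda-\lambda_j}$ is established, the agreement with the recursion of theorem~\ref{HfromA} is a one-line comparison, and the induction closes.
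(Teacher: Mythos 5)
Your proof is correct and is essentially the paper's own argument run in reverse: the paper inducts on the length $l$, substituting the inductive hypothesis into the recursion of theorem~\ref{HfromA} and reassembling the resulting double sum over $\{1,\dots,l\}\times S_{l-1}$ into a single sum over $S_l$ via the bijection $\alpha \leftrightarrow (\alpha(l),\beta)$, using exactly your two key facts, namely $[m]s^m c_{\lambda_\alpha}=c_{(\lambda-\lambda_i)_\beta}$ (the last factor of $c_\mu$ is always $1/([m]s^m)$) and $k_{\lambda-\lambda_i}=k_\lambda l_i/l$. Verifying that the closed form satisfies the recursion, as you do, is the same computation read in the opposite direction, so the two proofs coincide in substance.
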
 
\begin{proof} By induction on the length $l$ of the partition $\lambda$. If $l=1$ we have $\theta _{\lambda }=\theta _{(m)}=\frac{1}{[m]}$, and the right hand side is
\[
\frac{k_{(m)}}{1!}s^m\frac{1}{[m]s^m}=\frac{1}{[m]}.
\]
Assume now the formula for $1, \dots ,l-1$ and consider a partition $\lambda =(\lambda _1, \dots , \lambda _l) \vdash m$ with length $l$. By definition
\[
\theta _{\lambda}=\frac{1}{s^m[m]}\sum_{i=1}^l\frac{s^{\lambda _i}}{l_i}\theta _{\lambda -\lambda _i}
\]
and by induction (the partitions $\lambda -\lambda _i$ have length $l-1$), we have that
\[
\theta _{\lambda }=
\frac{1}{s^m[m]}\sum_{i=1}^l \left( \frac{s^{\lambda _i}}{l_i}\frac{k_{\lambda -\lambda _i}}{(l-1)!}s^{m-\lambda _i} \sum_{\beta \in S_{l-1}} c_{(\lambda -\lambda _i)_{\beta}}\right) .
\]
Since $k_{\lambda -\lambda _i}=k_{\lambda }\frac{l_i}{l}$, we deduce that
\[
\theta _{\lambda }=\frac{k_{\lambda }}{l! [m]}\sum_{i=1}^l \sum_{\beta \in S_{l-1}} c_{(\lambda -\lambda _i)_{\beta}}.
\]
For every $1 \leq i \leq l$ and permutation $\beta \in S_{l-1}$ we define the permutation $\alpha \in S_l$ as the composite permutation $\alpha=\beta \ (l, i, i+1, \dots , l-1) $ which maps $l$ to $i$, establishing a bijection between $\{ 1, \dots , l\} \times S_{l-1}$ and $S_l$. It turns out that $\lambda _i$ is the last part of $\lambda _{\alpha }$, and $\lambda _{\alpha }-(\lambda _\alpha)_l =(\lambda -\lambda _i)_{\beta }$. It follows that $[m]s^mc_{\lambda _{\alpha }}=c_{(\lambda -\lambda _i)_\beta}$, hence
\[
\theta _{\lambda }=\frac{k_{\lambda }}{l! [m]}\sum_{\alpha \in S_l} [m]s^mc_{\lambda _{\alpha}}=\frac{k_{\lambda }}{l!}s^m\sum_{\alpha \in S_l} c_{\lambda _{\alpha}}.
\]
\end{proof}

\begin{example}
We have
\[
\begin{array}{l}
h_1=A_1,\\
h_2=\frac{s}{s^2+1}A_2+\frac{1}{s^2+1}A_1^2,\\
h_3=\frac{s^2}{s^4+s^2+1}A_3
+\frac{s(s^2+2)}{(s^4+s^2+1)(s^2+1)}A_2A_1
+\frac{1}{(s^4+s^2+1)(s^2+1)}A_1^3, \ {\rm etc.} 
\end{array}
\]
 
For example, for $\lambda =(2,1)$, we have $k_{\lambda}=2$, $l=2$, $m=3$, $c_{(2,1)}=\frac{1}{[2][3]s^5}$ and $c_{(1,2)}=\frac{1}{[3]s^4}$, giving the coefficient of $A_2A_1$ in $h_3$ as $\theta _{(2,1)}=\frac{s(s^2+2)}{(s^4+s^2+1)(s^2+1)}$.

In general each coefficient in $h_m$ is a rational function with denominator $[m]!$.  As a further example,  the coefficient $\theta _{(3,3,2)}$ of $A_3^2A_2$ in $h_8$ is \[\ \frac{s^8}{2}(2c_{(3,3,2)}+2c_{(3,2,3)}+2c_{(2,3,3)})=s^8 \left( \frac{1}{s^{17}[3][6][8]}+\frac{1}{s^{16}[3][5][8]}+\frac{1}{s^{15}[2][5][8]} \right) .\]
\end{example}
 
\begin{corollary}\label{EfromA} We have a similar formula for the elementary symmetric functions,
\[
e_m=\sum_{\lambda \vdash m }\tau _{\lambda } \ A_{\lambda },
\]
where, for each partition $\lambda \vdash m$ with length $l$, the coefficient $\tau _{\lambda }$ is
\[
\tau _{\lambda } = (-1)^{m+l}\frac{k_{\lambda }}{l!}s^{-m}\sum _{\alpha \in S_l}
\prod _{i=1}^l\frac{s^{\lambda _{\alpha (1)}+ \dots +\lambda _{\alpha (i)}}}{[\lambda _{\alpha (1)}+\dots +\lambda _{\alpha (i)}]}.
\]
\end{corollary}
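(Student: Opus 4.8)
The plan is to derive the formula for $e_m$ from the one for $h_m$ in theorem~\ref{HfromA} by applying the coefficient involution $\varphi$ on $\CC$ that fixes every diagram, fixes $v$, and sends $s\mapsto -s^{-1}$ — precisely the involution recorded earlier as interchanging $h_m$ and $e_m$. The crucial point, distinguishing $\varphi$ from the mirror map, is that it acts only on scalars: each Turaev generator $A_m$ is a single closed braid with coefficient $1$, so $\varphi(A_m)=A_m$ and hence $\varphi(A_\lambda)=A_\lambda$ for every partition $\lambda$. (For this to make sense one checks that $\varphi$ is well defined on the skein, which holds because $z=s-s^{-1}$ is invariant under $s\mapsto -s^{-1}$, so both skein relations are preserved and the diagrams themselves are fixed.) Applying $\varphi$ to $h_m=\sum_{\lambda\vdash m}\theta_\lambda A_\lambda$ then yields $e_m=\sum_{\lambda\vdash m}\varphi(\theta_\lambda)\,A_\lambda$, so that $\tau_\lambda=\varphi(\theta_\lambda)$, and the whole problem reduces to evaluating $\varphi$ on the closed form for $\theta_\lambda$ given by lemma~\ref{theta}.

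Next I would record the two elementary transformation rules under $s\mapsto -s^{-1}$, namely $s^{n}\mapsto(-1)^{n}s^{-n}$ and $[n]\mapsto(-1)^{n+1}[n]$, the latter because the numerator $s^n-s^{-n}$ picks up the sign $-(-1)^n$ while the denominator $z$ is fixed. Writing $\sigma_i=\lambda_{\alpha(1)}+\cdots+\lambda_{\alpha(i)}$ for the partial sums appearing in $c_{\lambda_\alpha}$, each factor transforms as
\[
\frac{1}{[\sigma_i]\,s^{\sigma_i}}\ \longmapsto\ \frac{1}{\bigl((-1)^{\sigma_i+1}[\sigma_i]\bigr)\bigl((-1)^{\sigma_i}s^{-\sigma_i}\bigr)}=-\,\frac{s^{\sigma_i}}{[\sigma_i]},
\]
since the two powers of $-1$ combine to $(-1)^{2\sigma_i+1}=-1$. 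Hence $\varphi(c_{\lambda_\alpha})=(-1)^l\prod_{i=1}^l s^{\sigma_i}/[\sigma_i]$, and combining this with $\varphi(s^m)=(-1)^m s^{-m}$ in $\theta_\lambda=\frac{k_\lambda}{l!}s^m\sum_{\alpha\in S_l}c_{\lambda_\alpha}$ produces exactly the claimed expression for $\tau_\lambda$, the global sign collecting as $(-1)^{m+l}$.

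The argument carries no serious obstacle; it is essentially sign bookkeeping. The two points needing care are, first, the justification that $\varphi$ genuinely fixes the basis elements $A_\lambda$ — this is where the invariance of $z$ is used, and it is what lets the scalar coefficients absorb the entire effect of the involution — and second, keeping track of the signs in the per-factor computation so that the $(-1)^{\sigma_i}$ contributions collapse into a single factor $(-1)^{l}$ rather than a sum-dependent exponent. Once these are settled, the corollary is immediate from theorem~\ref{HfromA} and lemma~\ref{theta}. As a consistency check one can verify directly from the resulting formula that the coefficient of $A_1^m$ in $e_m$ equals $s^{m(m-1)/2}/[m]!$, the image under $\varphi$ of the coefficient $\theta_{(1,\ldots,1)}=1/\alpha_m$ of $A_1^m$ in $h_m$ noted in the remark following theorem~\ref{HfromA}.
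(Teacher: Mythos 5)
Your proof is correct and takes essentially the same route as the paper: the paper's own (two-line) proof obtains $e_m$ from $h_m$ by the substitution $s=-s^{-1}$, observing that $[k]s^k$ becomes $-\frac{[k]}{s^k}$, exactly the per-factor rule you derive. Your write-up merely makes explicit what the paper leaves implicit — that the involution fixes the diagrams $A_\lambda$ because $z$ is invariant, and that the factors $(-1)^{\sigma_i}$ collapse to the global sign $(-1)^{m+l}$.
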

\begin{proof}
The element $e_m$ can be obtained from $h_m$ with the substitution $s=-s^{-1}$. After this substitution, $[k]s^k$ becomes $-\frac{[k]}{s^k}$. 
\end{proof}

\section{Schur functions and hook partitions}\label{hook}
The meridian maps, introduced explicitly in \cite{skein}, are linear maps $\varphi,\overline\varphi:\CC\to\CC$, induced by including an oriented meridian around any diagram $X$ in the annulus as shown in figure \ref{figmeridianmap}.

\begin{figure}[ht!]
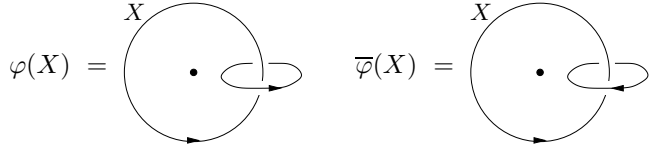

\bc{\labellist
\small
\pinlabel {$X$} at 22 262
\endlabellist}
$\varphi(X)\ =\ \meridianmap\quad\quad\overline{\varphi}(X)\ =\ {\labellist
\small
\pinlabel {$X$} at 22 262
\endlabellist}\meridianmapRev$
\ec 
\caption{The meridian maps} \label{figmeridianmap}
\end{figure}

It is shown in \cite{HadjiMorton} that the eigenvectors of $\varphi$ have no repeated eigenvalues, and that there is a basis $\qlm$ of $\CC$ consisting of these eigenvectors, where $\gl $ and $\mu$ run through all pairs of partitions. The subspace $\CC_m$ has a basis $Q_{\gl,\phi}=Q_\gl$, where $\gl$ runs through partitions of $m$. 

This basis has been identified  by Lukac with the basis formed by the 
 closures $Q_{\lambda }$ of Aiston's  idempotent elements $e_\gl$ in the Hecke algebra $H_m$. Lukac has shown also   that they   represent the Schur functions $s_{\lambda}$ in the interpretation as symmetric functions. Thus they can be expressed  as determinants of the elements $h_m$ by the Jacobi-Trudi formula; precisely, $Q_{\lambda }={\rm det}(h_{\lambda _i+j-i})_{1\leq i,j \leq l}$ if $\lambda =(\lambda _1, \dots , \lambda _l)$ (\cite{{Sascha},{HadjiMorton}}). Since $\overline{h}_i=h_i$ (\cite{skein}, lemma 3.7), the 
 elements $Q_{\lambda}$ are not affected by the mirror map.

As extreme cases we have $Q_{\lambda}=h_m$ when $\lambda =(m)$ is a row partition, and $Q_\gl=e_m$ when $\gl= (1, \stackrel{m}{\dots}, 1)$ is a column partition.
In Frobenius notation, $(a|b)$ denotes the \emph{hook} partition of $m=a+b+1$ with an arm of length $a$ and a leg of length $b$, as shown in figure \ref{fighook}.
 
\begin{figure}[ht!]
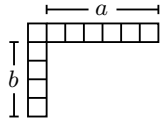

\labellist
\small
\pinlabel {$a$} at 91 105
\pinlabel {$b$} at 6 38
\endlabellist
\bc
\hook\ec
\caption{The hook $(a|b)$} \label{fighook}
\end{figure}

The hook partitions of $m$  include the single row $(m-1|0)$ and the single column $(0|m-1)$. The power sums can be written, by the Frobenius character formula, as $P_m=\sum _{a+b=m-1}(-1)^bQ_{(a|b)}$ (\cite{FultonHarris}, 4.10, 4.16). In particular,  $P_m$ is not affected by the mirror map. Since $\overline{[m]}=[m]$, we have also that $\overline{X}_m=X_m$.

The Pieri formula for products of Schur functions (\cite{Macdonald}, page 73) shows that $h_ie_j$ is the sum of the Schur functions of two hook partitions $(i-1|j)$ and $(i|j-1)$. We can write explicitly $h_i e_j=Q_{(i-1|j)}+Q_{(i|j-1)}$ for all $i,j\ge0$, by setting $Q_{(i|j)}=0$ when $i<0$ or $j<0$. 
We can use equation (\ref{eqnAH}) to write $A_m$ in the basis $\{Q_\gl\}_{\gl\vdash m}$, where the only partitions $\gl$ required are hooks of length $m$.

\begin{theorem}\label{Ahook}
\[A_m=\sum_{a+b=m-1}(-1)^bs^{a-b}Q_{(a|b)}.\]
\end{theorem}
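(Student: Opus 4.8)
The plan is to start from equation (\ref{eqnAH}), $A(t)=H(st)/H(s^{-1}t)$, and convert the quotient into a product by exploiting the relation $H(t)E(-t)=1$ recorded above. That relation gives $1/H(s^{-1}t)=E(-s^{-1}t)$, so I would rewrite
\[
A(t)=H(st)\,E(-s^{-1}t).
\]
Expanding $H(st)=\sum_{i\ge0}h_is^it^i$ and $E(-s^{-1}t)=\sum_{j\ge0}(-1)^je_js^{-j}t^j$, the coefficient of $t^m$ in the product is $\sum_{i+j=m}(-1)^js^{i-j}h_ie_j$. Since the coefficient of $t^m$ in $A(t)=1+z\sum A_it^i$ is $zA_m$ for $m\ge1$, comparison yields
\[
zA_m=\sum_{i+j=m}(-1)^js^{i-j}h_ie_j.
\]

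Next I would substitute the Pieri identity $h_ie_j=Q_{(i-1|j)}+Q_{(i|j-1)}$ (with the convention $Q_{(i|j)}=0$ when $i<0$ or $j<0$) and reorganise the sum by collecting, for each hook $(a|b)$ with $a+b=m-1$, its total coefficient. The first Pieri term $Q_{(i-1|j)}$ feeds the hook $(a|b)=(i-1,j)$ with coefficient $(-1)^bs^{a-b+1}$, while the second term $Q_{(i|j-1)}$ feeds $(a|b)=(i,j-1)$ with coefficient $-(-1)^bs^{a-b-1}$. Adding the two contributions to a fixed $Q_{(a|b)}$ gives
\[
(-1)^b\bigl(s^{a-b+1}-s^{a-b-1}\bigr)=(-1)^bs^{a-b}(s-s^{-1})=z\,(-1)^bs^{a-b}.
\]
Hence $zA_m=z\sum_{a+b=m-1}(-1)^bs^{a-b}Q_{(a|b)}$, and cancelling $z$ delivers the stated formula.

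The conceptual content is light — the one genuinely clever move is turning the quotient $H(st)/H(s^{-1}t)$ into the product $H(st)E(-s^{-1}t)$ via $H(t)E(-t)=1$, which is what makes the Pieri expansion applicable. I expect the only thing needing care to be the reindexing bookkeeping: it is precisely the overlap of adjacent hooks coming from consecutive Pieri products that telescopes to produce the common factor $s-s^{-1}=z$, cancelling the $z$ built into the definition of $A(t)$. I would also check the two boundary hooks, the single row $(m-1|0)$ and the single column $(0|m-1)$, which are handled automatically because the $i=0$ and $j=0$ endpoints contribute a vanishing $Q$ under the convention above, so no spurious terms survive.
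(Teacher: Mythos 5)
Your proof is correct and follows essentially the same route as the paper: rewriting $A(t)=H(st)E(-s^{-1}t)$ via $H(t)E(-t)=1$, extracting the coefficient of $t^m$, applying the Pieri identity $h_ie_j=Q_{(i-1|j)}+Q_{(i|j-1)}$, and reindexing over hooks so that the factor $s-s^{-1}=z$ emerges and cancels. The reindexing bookkeeping and the boundary convention $Q_{(i|j)}=0$ for $i<0$ or $j<0$ are handled exactly as in the paper's own argument.
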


\begin{proof}
By equation (\ref{eqnAH}) we have
\[A(t)=\frac{H(st)}{H(s^{-1}t)}={H(st)}{E(-s^{-1}t)}.
\]

Comparing the coefficients of $t^m$, taking $h_0=e_0=1$, gives
\begin{eqnarray*}zA_m&=&\sum_{i+j=m}s^ih_i(-1)^js^{-j}e_j=\sum_{i+j=m}(-1)^js^{i-j}h_i e_j\\
&=&\sum_{i+j=m}(-1)^js^{i-j}\left(Q_{(i-1|j)}+Q_{(i|j-1)}\right).
\end{eqnarray*} 

We can rewrite the sum as \begin{eqnarray*}&&\sum_{a+b=m-1} (-1)^{b}s^{a+1-b}Q_{(a|b)}+\sum_{a+b=m-1} (-1)^{b+1}s^{a-b-1}Q_{(a|b)}\\&=&(s-s^{-1})\sum_{a+b=m-1} (-1)^{b}s^{a-b}Q_{(a|b)},\end{eqnarray*} giving the result, since $z=s-s^{-1}$.
\end{proof}

The formula obtained in theorem \ref{Ahook} resembles the formula obtained by Rosso and Jones in  \cite{Rosso}, theorem~8. They remark there that the only partitions which occur when calculating the Homfly polynomial of the torus knots are hooks.  We show later how to deduce theorem \ref{Ahook} from theorem \ref{skeinplethysm}, which makes use of quantum invariants and \cite{Rosso} in its proof.

We can now give a simpler diagrammatic representation of $P_m$ using   theorem~\ref{Ahook} and the meridian map $\varphi$. For $Q\in\CC$ set $\Delta_\varphi (Q)=\varphi(Q)-\delta Q \in\CC$  and $\Delta_{\overline{\varphi}} (Q)=\overline{\varphi}(Q)-\delta Q 
\in\CC$.  
\begin{theorem}\label{Xdelta} 
We have 
\[(s-s^{-1})X_m= v \Delta _{\varphi }(\overline{A}_m) =-v^{-1}\Delta_{\overline{\varphi}}(A_m),\]
hence 
\[\{m\}P_m=v\Delta _{\varphi }(\overline{A}_m)=-v^{-1}\Delta_{\overline{\varphi}}(A_m).\]
\end{theorem}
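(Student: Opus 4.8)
The plan is to prove the first chain of equalities, $(s-s^{-1})X_m=v\Delta_{\varphi}(\overline{A}_m)=-v^{-1}\Delta_{\overline{\varphi}}(A_m)$, and then read off the second for nothing: since $X_m=[m]P_m$ and $[m](s-s^{-1})=\{m\}$, we have $(s-s^{-1})X_m=\{m\}P_m$, so substituting turns the first display into the second. I would attack the first display in the eigenbasis $\{Q_{(a|b)}\}$ of the meridian map, because both $A_m$ and $P_m$ already have transparent hook expansions there.

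First I would expand everything into hooks. Theorem~\ref{Ahook} gives $A_m=\sum_{a+b=m-1}(-1)^bs^{a-b}Q_{(a|b)}$, and the mirror map --- which fixes each $Q_{(a|b)}$ and replaces $s$ by $s^{-1}$ --- then yields $\overline{A}_m=\sum_{a+b=m-1}(-1)^bs^{b-a}Q_{(a|b)}$; the Frobenius expansion $P_m=\sum_{a+b=m-1}(-1)^bQ_{(a|b)}$ is already in hand. Since each $Q_{(a|b)}$ is an eigenvector of $\varphi$, say $\varphi(Q_{(a|b)})=c_{(a|b)}Q_{(a|b)}$, the operator $\Delta_{\varphi}=\varphi-\delta$ acts diagonally, and the whole first equality collapses to a single scalar identity in the eigenvalues.

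The key input is therefore the meridian eigenvalue on a hook. Using the eigenvalue formula of Hadji--Morton \cite{HadjiMorton}, which in the present normalisation reads $c_\lambda=\delta+v^{-1}(s-s^{-1})\sum_{x\in\lambda}s^{2c(x)}$ with $c(x)$ the content of the cell $x$, I would specialise to the hook $(a|b)$: its cells have contents $-b,\dots,-1,0,1,\dots,a$, so the content sum is the geometric series $\sum_{k=-b}^{a}s^{2k}$, and multiplying by $(s-s^{-1})$ telescopes to $s^{2a+1}-s^{-2b-1}$. With $m=a+b+1$ this is exactly $(s^m-s^{-m})s^{a-b}$, giving the clean formula $c_{(a|b)}-\delta=v^{-1}(s^m-s^{-m})s^{a-b}$. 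Feeding this into the expansion of $\overline{A}_m$ makes the $s^{a-b}$ and $s^{b-a}$ cancel termwise, so
\[
\Delta_{\varphi}(\overline{A}_m)=v^{-1}(s^m-s^{-m})\sum_{a+b=m-1}(-1)^bQ_{(a|b)}=v^{-1}\{m\}P_m,
\]
whence $v\Delta_{\varphi}(\overline{A}_m)=\{m\}P_m=(s-s^{-1})X_m$, the first equality.

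For the second equality I would simply apply the mirror map to the first. Using $\overline{s-s^{-1}}=-(s-s^{-1})$, $\overline{X}_m=X_m$, $\overline{\delta}=\delta$, $\overline{v}=v^{-1}$, and the relation $\overline{\varphi}(X)=\overline{\varphi(\overline{X})}$ between the two meridian maps, the image of $(s-s^{-1})X_m=v\Delta_{\varphi}(\overline{A}_m)$ under the bar involution is precisely $-(s-s^{-1})X_m=v^{-1}\Delta_{\overline{\varphi}}(A_m)$, i.e. $(s-s^{-1})X_m=-v^{-1}\Delta_{\overline{\varphi}}(A_m)$. The main obstacle is pinning down the hook eigenvalue with the correct powers of $v$ and $s$ in this paper's conventions --- equivalently, checking the telescoping content sum and confirming that $\delta$, the $Q_{(a|b)}$ and the two meridian maps transform under the bar involution as claimed. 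A more self-contained geometric route, presumably the one behind \cite{power}, would instead resolve the encircling meridian around $\overline{A}_m$ strand by strand with the skein relation, identify the partial resolutions with the intermediate braids $A_{i,j}$ whose sum is $X_m$, and absorb the resulting framing curl into the factor $v$; that is conceptually cleaner but the bookkeeping of the resolution is where the real work lies.
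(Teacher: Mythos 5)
Your proposal is correct and follows essentially the same route as the paper's own proof: expanding $\overline{A}_m$ in hooks via theorem~\ref{Ahook} and the mirror map, applying the meridian eigenvalue formula (\ref{eqndeltaphi}) together with the telescoping hook content sum $\sum_{x\in(a|b)}s^{2c(x)}=\frac{s^m-s^{-m}}{s-s^{-1}}s^{a-b}$, recognising the Frobenius expansion of $P_m$, and obtaining the second equality by applying the mirror map. The only difference is one of detail, not of method: you verify the hook content computation and the mirror-map bookkeeping (e.g. $\overline{\varphi}(X)=\overline{\varphi(\overline{X})}$, $\overline{\delta}=\delta$) explicitly where the paper simply asserts them.
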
 
\begin{proof} Applying the mirror map to the 
equation of theorem \ref{Ahook} we get  
\[
\overline{A}_m=\sum_{(a|b)\vdash m}(-1)^bs^{b-a}Q_{(a|b)}.
\]

It follows that
\[
\Delta _{\varphi }(\overline{A}_m) =\sum_{(a|b)\vdash m}(-1)^bs^{b-a}\Delta _{\varphi }(Q_{(a|b)}).
\]

In general,
\be
\Delta _{\varphi }(Q_{\lambda})=v^{-1}(s-s^{-1})\sum_{x \in \lambda }s^{2c(x)}Q_{\lambda } \label{eqndeltaphi}
\ee
where the sum runs over cells $x\in\gl$  and $c(x)=j-i$ is the {\em content} of the cell $x$ in position $(i,j)$, which can be deduced from \cite{HadjiMorton}, theorem 3.4.

For $\lambda =(a|b)\vdash m$ we have $\ds\sum_{x \in \lambda }s^{2c(x)}=\frac{s^m-s^{-m}}{s-s^{-1}}s^{a-b}$, hence in particular
\[
\Delta _{\varphi }(Q_{(a|b)})=v^{-1}(s^m-s^{-m})s^{a-b}Q_{(a|b)}.
\]
Then
\begin{eqnarray*}
v\Delta _{\varphi }(\overline{A}_m) 
& = & (s^m-s^{-m}) \sum_{(a|b)\vdash m}(-1)^bQ_{(a|b)} \\
& = & (s^m-s^{-m}) P_m = \{m\}P_m\\
& = &  (s-s^{-1})X_m.
\end{eqnarray*}

Applying the mirror map gives the other representation.
\end{proof}

Hence we have an even simpler diagrammatic representative for $P_m$ in $\CC$ in terms of just two closed tangles,  as seen in figure \ref{figmeridianAm}.
\begin{figure}[ht!]
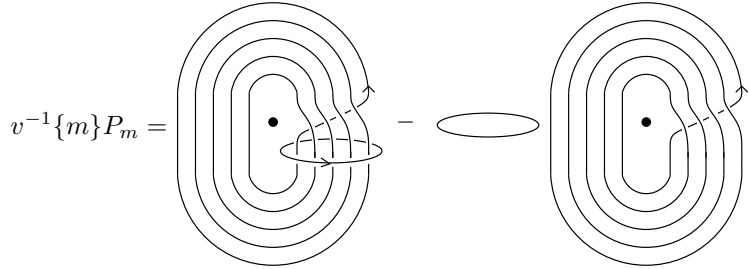

\labellist
\small
\pinlabel {$-$} at 346 217
\endlabellist
\bc
$v^{-1}\{ m\} P_m =$ \OtherRepresentationPm
\ec
\caption{A representation of $P_m$ by two tangles} \label{figmeridianAm}
\end{figure}

\section{Satellite and quantum $sl(N)$ invariants}\label{quantum}

One of the most useful features of the skein $\CC$ is its role in parametrising Homfly satellite invariants of a framed knot $K$. 

\subsection{Satellites}
 A satellite of $K$ is determined by choosing a diagram $Q$ in the standard annulus, and then drawing $Q$ on the annular neighbourhood of $K$ determined by the framing, to give the satellite knot $K*Q$. We refer to this construction as {\em decorating $K$ with the pattern $Q$} (see  figure \ref{figsatellite}). 

\begin{figure}[ht]
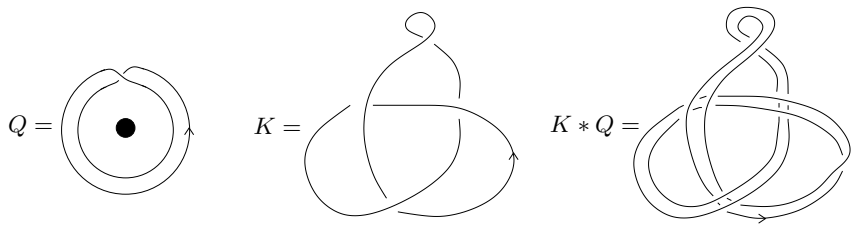

\labellist
\small
\pinlabel {$Q=$} at -60 195
\pinlabel {$K=$} at 430 195
\pinlabel {$K*Q=$} at 1060 195
\endlabellist
\bc
\Satellite
\ec
\caption{Satellite construction} \label{figsatellite}
\end{figure}

The Homfly polynomial $P(K*Q)$ of the satellite depends on $Q$ only as an element of the skein $\CC$ of the annulus, hence we  can extend the definition of  $K*Q$ to cover a general element $Q\in\CC$ if we are only concerned with its Homfly polynomial. We  regard $\CC$ as the natural parameter space for these invariants of $K$, known collectively as the {\em Homfly satellite invariants} of $K$. We use the notation $P(K;Q)$ in place of $P(K*Q)$ when we want to emphasise the dependence on $K$.  When $Q$ is restricted to lie in $\CC_m$ the invariants are called the $m$-string satellite invariants, and can be realised as linear combination of a finite number of satellite invariants. For example any closed $m$-braid $Q=\hat \beta$ in the annulus determines an element of $\CC_m$ which can be written as $Q=\sum_{\gl\vdash m} c_\gl Q_\gl$ in terms of the basis $\{Q_\gl\}_{ \gl\vdash m}$, with coefficients $c_\gl\in\Lambda$. The Homfly polynomial of the satellite $K*Q$ is then $P(K*Q)=\sum c_\gl P(K*Q_\gl)$.  

 The same overall collection of invariants of $K$ can be constructed from  the quantum invariants arising from the quantum groups $sl(N)_q$.

Here is a brief summary of the interrelations. A more extensive account can be found in the thesis of Lukac (\cite{Lukacthesis}), including details of   variant Homfly skeins with a framing correction factor, $x$. These are isomorphic to the skeins used here but the parameter allows a careful adjustment of the quadratic skein relation to agree directly with the natural relation arising from use of the quantum groups $sl(N)_q$. 

\subsection{Quantum invariants}

Quantum groups give rise to 1-parameter invariants $J(K;W)$ of  an oriented framed knot $K$ depending on a choice of finite dimensional module $W$ over the quantum group, following constructions of Turaev and others (\cite{{Turaev},{Wenzl},{AistonMorton}}). This choice is referred to as {\em colouring} $K$ by $W$, and can be extended for a link allowing a choice of colour for each component.

Fix a natural number $N$. When we colour $K$ by a finite dimensional module $W$ over the quantum group $sl(N)_q$, its invariant $J(K;W)$ depends  on one variable {$s$}. The invariant $J$ is linear under the direct sum of modules and all the modules over $sl(N)_q$ are semi-simple, so we can restrict our attention to the  irreducible modules $V_\gl^{(N)}$. For $sl(N)_q$ these are indexed by partitions $\gl$ with at most $N$ parts, without distinguishing two partitions which differ in some initial columns with $N$ cells.

 \begin{remark}(Comparison theorem)
\par
\begin{enumerate}
\item 
The  $sl(N)_q$ invariant for the irreducible module $V_\gl^{(N)}$ is the Homfly invariant for the knot decorated by $Q_\gl$ with $v=s^{-N}$, suitably normalised as in~\cite{Lukacthesis}. Explicitly,
\[P(K*Q_\gl)|_{v=s^{-N}}=x^{k|\gl|^2}J(K;V_\gl^{(N)})\] where $k$ is the writhe of $K$, $x=s^{1/N}$  and $|\gl|=\sum \gl _i$.

\item Each invariant $P(K*Q)|_{{v=s^{-N}}}$ is a linear combination $\sum c_\alpha J(K;W_\alpha)$ of quantum invariants .
 
\item Each $J(K;W)$ is a linear combination $\sum d_j P(K*Q_j)|_{{v=s^{-N}}}$ of  Homfly invariants.
 
\end{enumerate}
 
\end{remark}

\begin{remark} 
The 2-variable invariant $P(K*Q)$ can be recovered from the specialisations $P(K*Q)|_{{v=s^{-N}}}$ for sufficiently many $N$.
\end{remark}
\begin{remark}
If the pattern $Q$ is a closed braid on $m$ strings then we only need use partitions $\gl \vdash m$, since $\CC_m$ is spanned by $\{Q_\gl\}_{ \gl\vdash m}$.
Conversely, to realise $J(K;V_\gl^{(N)})$ with $ \gl\vdash m$  we can use closed $m$-braid patterns.
\end{remark}

\subsection{Basic constructions}
A quantum group $\cal G$ is an algebra over a formal power series ring ${\bf Q}[[h]]$, typically a deformed version of a classical Lie algebra. We write $q=e^h, s=e^{h/2}$ when working in $sl(N)_q$.
A finite dimensional module over $\cal G$ is a linear space on which $\cal G$ acts.

Crucially, $\cal G$ has a coproduct $\Delta$ which ensures that the tensor product ${V\otimes W}$ of two modules is also a module.
It also has a {\em universal $R$-matrix} (in a completion of ${\cal G}\otimes{\cal G}$) which determines a well-behaved module isomorphism \[R_{VW}:V\otimes W \to W\otimes V.\]

This has a diagrammatic view
indicating its use in converting coloured tangles to module homomorphisms:
\bc
{\labellist
\small
\pinlabel{{$ W\ \otimes \ V$}} at 165 156
\pinlabel{{$V\ \otimes \ W$}} at 165 12
\pinlabel{$R_{VW}$} at -50 84
\endlabellist}
\Rmatrix
\ec

A braid $\beta$ on $m$ strings with permutation $\pi\in S_m$ and a colouring of the strings by modules $V_1,\ldots,V_m$ leads to a module homomorphism 
\[J_\beta:V_1\otimes\cdots\otimes V_m \to V_{\pi(1)}\otimes\cdots\otimes V_{\pi(m)}\] using $R_{V_i,V_j}^{\pm1}$ at each elementary braid crossing.
The homomorphism $J_\beta$ depends {\em only on the braid} $\beta$ itself, not its decomposition into crossings, by the Yang-Baxter relation for the universal $R$-matrix.

When $V_i=V$ for all $i$ we get a module homomorphism $J_\beta:W\to W$, where 
$W=V^{\otimes m}$. Now any module $W$ decomposes as a  direct sum $\bigoplus {(W_\mu\otimes V_\mu^{(N)})}$, where $W_\mu \subset W$ is a linear subspace consisting of the {\em highest weight vectors} of type $\mu$  associated to the module $V_\mu^{(N)}$.   Highest weight subspaces of each type are preserved by module homomorphisms, and so $J_\beta$ determines (and is determined by) the restrictions $J_\beta(\mu):W_\mu \to W_\mu$ for each $\mu$, where $\mu $ runs over partitions with at most $N$ parts.

If a knot  (or one component of a link) $K$ is decorated by a pattern $T$ which is the closure of an $m$-braid $\beta$, then its quantum invariant $J(K*T;V)$ can be found from the endomorphism $J_\beta$ of $W=V^{\otimes m}$ in terms of the quantum invariants of $K$ and the restriction maps $J_\beta(\mu):W_\mu \to W_\mu$   by the formula
\be J(K*T;V)=\sum c_\mu J(K;V_\mu^{(N)}) \label{weighttrace}\ee with $c_\mu=\mbox{tr}\, J_\beta(\mu)$. This formula follows from lemma II.4.4 in Turaev's book \cite{Turaevbook}. We set $c_\mu=0$ when $W$ has no highest weight vectors of type $\mu$.

\subsection{Invariants of satellites}

The quantum invariant $J(K*T;V)$,  where $V=\bigoplus V_\gl^{(N)}$ is decomposed into irreducible modules, is the sum $\sum J(K*T;V_\gl^{(N)})$. This is given by  the sum of   Homfly satellite invariants $\sum P(K*T;Q_\gl)$, with $v=s^{-N}$, after adjustment by the framing correction parameter $x$. 

To discuss these further we note that the satellite $(K*T)*Q$ of $K*T$ when decorated with a pattern $Q$ can also be viewed as $K*(T*Q)$, namely the satellite of $K$ when decorated by the pattern $T*Q$ in the annulus.
 For a general element $Q=\sum c_i Q_i$ in $\CC$, written as a linear combination of diagrams $Q_i$, we can define $T*Q$ as an element of $\CC$ by $T*Q=\sum c_i T*Q_i$. This leads to the equation 
\[P(K*T;Q)=P(K;T*Q),\] where $T$ is a diagram in the annulus and $Q\in\CC$.

Hence we can find the Homfly polynomial $P(K*T;Q_\gl)$ as the satellite invariant $P(K;T*Q_\gl)$, which in turn can be found by writing   $T*Q_\gl$ in terms of the basis elements of the skein $\CC$.  Where $T$ is a closed $m$-braid and $\gl\vdash d$, this element lies in $\CC_{md}$ and we have
\[T*Q_\gl=\sum_{\mu\vdash md} a_\mu Q_\mu \] for some $a_\mu\in\Lambda$, giving
\[P(K*T;Q_\gl)=\sum_{\mu\vdash md} a_\mu P(K; Q_\mu) .\]

\begin{remark}The same is true if the diagram  $T$ is the closure of an $m$-tangle with all strings oriented in the same direction, but we must use the full basis elements $Q_{\nu,\rho}$ when $T$ is the closure of a tangle  with some reverse oriented strings.
\end{remark}

\section{Cables and plethysms}\label{cable}
The work of Rosso and Jones on traces in quantum groups, \cite{Rosso}, gives us a skein theoretic description of $T*Q$ in the annulus where $T=T_m^n$ is a {\em cable diagram}, and $Q$ is an element of $\CC_+$.

By the cable diagram $T_m^n$ we mean the diagram in the annulus formed by
closing the framed $m$-braid $(\beta_m)^n$ shown in figure \ref{torusbraid}.
With the blackboard framing, $T_m^n$ is the diagram of the $(m,n)$ torus
link with framing given by its neighbourhood in the surface of the torus. When $n=1$ we have $T_m^1=v^{-1}A_m$ as an element of $\CC$ because of the choice of framing.
\begin{figure}[ht]
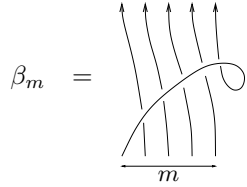

\labellist
\small
\pinlabel {$m$} at 134 -25
\endlabellist
\bc
$\beta_m$\quad=\quad \betam
\ec

\caption{$T_m^n$ is the closure of $(\beta_m)^n$}\label{torusbraid}
\end{figure}

If $m$ and $n$ have highest common factor $d$ we can regard $T_m^n$ as the $d$-fold parallel of a torus {\em knot} diagram, and reduce our calculations to the case where $m$ and $n$ are coprime.
In this case the 
  cable diagram $T_m^n$  induces a map $F_m^n:\CC\to\CC$ taking an element $Q\in \CC$ to the satellite $T_m^n*Q$.

The framing change map is the map $\tau=F_1^1$, illustrated in figure \ref{framingmap} by its effect on the 2-parallel element $(A_1)^2$. 

\begin{figure}[ht!]
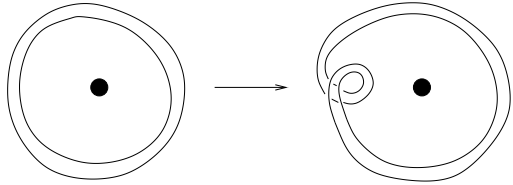

\labellist
\endlabellist
\bc
\framingmap
\ec
\caption{The framing change map on a $2$-parallel}\label{framingmap}
\end{figure}

It is shown in \cite{AistonMorton}, theorem 17, that $\tau (Q_{\lambda })=\tau _{\lambda  }Q_{\lambda  }$ where $\tau _{\lambda}=v^{k}s^r$ with $k=-|\gl|$ and $\ds r=2\sum_{x\in\gl} c(x)$.

We define a {\em fractional twist} map   $\tau ^{\frac{n}{m}}:{\cal C_+} \to {\cal C_+}$ as the linear map defined on the basis $\{Q_\gl\}$ by
\[
\tau ^{\frac{n}{m}}(Q_{\lambda  })=(\tau _{\lambda })^{\frac{n}{m}}Q_{\lambda }.
\]

\begin{remark} Since the basis vectors $\qlm$ for $\CC$ are also eigenvectors of the framing change map, \cite{HadjiMorton}, we could define $\tau ^{\frac{n}{m}}$ on the whole of $\CC$ in a similar way, using the fact that $\qlm$ has eigenvalue $\tau_{\gl,\mu}=v^ks^r$ with $k=|\mu|-|\gl|$ and $\ds r=2\sum_{x\in\gl} c(x)-2\sum_{x\in\mu} c(x)$.
\end{remark}

To give the formula for $F_m^n(Q)$ with $Q\in \CC_+$ we shall use the interpretation of $\CC_+$ as the ring of symmetric functions, and adopt the terminology of {\em plethysms} to describe the resulting elements of the skein.

\subsection{Plethysm} Let $p(x_1, \dots , x_N)=M_1+\dots +M_r$ be a symmetric polynomial in $N$ variables, which can be written as a sum of $r$ monomials, each with coefficient $1$. These include the Schur functions and the power sums.  Let $q(x_1, \dots , x_r)$ be a symmetric function in $r$ variables. The plethysm $q [p]$  is the symmetric function of $N$ variables
\[
q[ p] =q(M_1, \dots , M_r).
\]
\begin{remark}
A more general definition covering all symmetric polynomials $p$, along with further properties of plethysms, can be found in \cite{Macdonald}, where
 the notation $q\circ p$ is used in place of $q[p]$. We adopt here the notation from \cite{Remmel}.
\end{remark}

 We can write the symmetric polynomial $q[p]$ in the basis   of Schur functions as the  linear combination 
\[q [ p]=\sum _{\nu }b_{qp}^{\nu }s_{\nu }.\]
Determining the coefficients $b_{qp}^\nu$ is in general a non-trivial problem. If $p$ and $q$ are themselves Schur functions $s_{\lambda }$ and $s_{\mu}$ respectively, we write
\[s_{\mu} [s_{\lambda }]=\sum _{\nu }a_{\mu \lambda}^{\nu }s_{\nu }.\] It is shown in \cite{Macdonald} that $a_{\mu \lambda}^{\nu }$ is a non-negative integer in all cases. It is a feature of many such calculations with symmetric polynomials that the coefficients are independent of the number of variables, $N$, under the condition that we take $s_\nu=0$ when $\nu$ has more than $N$ parts.

Here are some properties of plethysms that we will use shortly:

\begin{enumerate}
    \item $q[p]$ is linear in $q$: $(a_1q_1+a_2q_2)[p] =a_1q_1 [p] + a_2q_2 [p]$, for any scalars $a_1,a_2$.
    \item In general $q[p]$ is not linear in $p$, but  if $q=P_m$ is a power sum, then $ q[A+B]=q[A]+q[B]$, where $A$ and $B$ are both sums of monomials. For if $A=A_1+ \dots + A_r$ and $B=B_1+ \dots + B_s$, then  
$P_m[A+B]=A_1^m+\dots +A_r^m+B_1^m+\dots +B_s^m=P_m [ A]+P_m [ B].$
    \item $s_{\lambda }[P_m]=P_m [s_{\lambda }]$ for any partition $\lambda$.
      \item $P_d [P_m ]= P_{md}= P_m [P_d]$, since
    $P_d [P_m] = (x_1^m)^d+ \dots + (x_r^m)^d=x_1^{md}+ \dots + x_r^{md}=P_{md}$.

\end{enumerate}    

 Let $Q$ be any element of $ \CC_+$, regarded as a symmetric function, and let $P\in\CC_+$ represent a sum of monomials each with coefficient $+1$, for example $P=P_m$ or $P=Q_\gl$.
We adopt the notation $Q[P]\in \CC_+$ for the element corresponding to the plethysm of the functions represented by $Q$ and $P$.

With this notation we can give a compact formula for the element $F_m^n(Q)=T_m^n*Q$ given by decorating the $(m,n)$ torus link in the annulus by $Q\in\CC_+$, shown schematically in figure \ref{Fmn}.

\begin{figure}[ht]
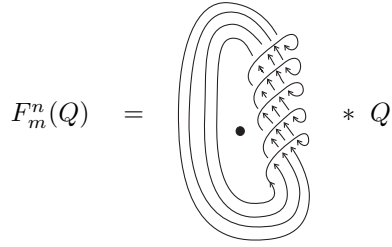

\bc
$F_m^n(Q) \quad = \quad \torusknot \quad *\ Q$
\ec
\caption{The element $F_m^n(Q)$ with $m=4, n=5$}\label{Fmn}
\end{figure}

This formula is purely in terms of the Homfly skein of the annulus, although the proof makes use of the formulae in \cite{Rosso} for quantum invariants of cables.

\begin{theorem} \label{skeinplethysm} Let $Q\in\CC_+$. Then \[F_m^n(Q)=\tau ^{\frac{n}{m}}(Q[ P_m]).\]
\end{theorem}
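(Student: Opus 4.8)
The plan is to establish the formula first on the basis $\{Q_\gl\}$ of $\CC_+$ and then extend by linearity, using the comparison between Homfly satellite invariants and quantum $sl(N)_q$ invariants as the bridge to the Rosso--Jones cabling formula. Since both $F_m^n$ and $\tau^{\frac{n}{m}}(\,\cdot\,[P_m])$ are $\Lambda$-linear, it suffices to verify the identity when $Q=Q_\gl$ for an arbitrary partition $\gl\vdash d$. Thus I would reduce the theorem to the claim that $F_m^n(Q_\gl)=\tau^{\frac{n}{m}}(Q_\gl[P_m])$, and then recognise that both sides lie in $\CC_{md}$ and may be compared coefficientwise against the basis $\{Q_\nu\}_{\nu\vdash md}$.

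Next I would translate each side into quantum $sl(N)_q$ language for all sufficiently large $N$, invoking the Comparison theorem: an identity in $\CC_{md}$ holds iff the corresponding specialisations $v=s^{-N}$ agree for enough values of $N$, and each $Q_\nu$ corresponds to the irreducible module $V_\nu^{(N)}$. On this side, $F_m^n(Q_\gl)=T_m^n*Q_\gl$ decorates the $(m,n)$ torus cable by $Q_\gl$, so its invariant is governed by the weight-space decomposition of $V_\gl^{\otimes m}$ under the braid $(\beta_m)^n$ via the trace formula (\ref{weighttrace}). The key input is the Rosso--Jones formula, which describes exactly these cabled traces: the decomposition of the coloured cable is controlled by the branching of $Q_\gl[P_m]$ into Schur functions, while the $n$-th power of the full-twist braid $(\beta_m)^n$ contributes the eigenvalue of the framing-change operator on each summand, raised to the appropriate fractional power $\tfrac{n}{m}$. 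This is precisely what the operator $\tau^{\frac{n}{m}}$ encodes, since $\tau$ acts on each $Q_\nu$ by the scalar $\tau_\nu=v^{-|\nu|}s^{2\sum_{x\in\nu}c(x)}$ recorded just above, and the $m$-fold cabling accounts for the denominator $m$ in the exponent.

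\textbf{The main obstacle} I expect is matching the Rosso--Jones eigenvalues correctly to the fractional-twist scalars $(\tau_\nu)^{\frac{n}{m}}$, including bookkeeping for framing and writhe corrections and the factor $x=s^{1/N}$ appearing in the Comparison theorem. The Rosso--Jones formula is stated for the $(m,n)$ torus knot in a specific framing convention, so I must carefully reconcile their framing with the blackboard framing used for $T_m^n$ and with the normalisation $T_m^1=v^{-1}A_m$; any discrepancy in the overall power of $v$ or $s$ would invalidate the identity, and the fractional exponents make such errors easy to commit. The combinatorial heart---that the $sl(N)$ character of the $n$-cabled, $m$-twisted decoration equals the plethysm $Q_\gl[P_m]$ weighted by twist eigenvalues---is exactly the content of \cite{Rosso}; my task is to repackage it as a clean skein identity. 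Once the two sides agree as functions of $s$ for infinitely many $N$, the equality in $\CC_{md}$ follows, and linearity in $Q$ completes the proof.
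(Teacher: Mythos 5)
Your plan reproduces the paper's proof skeleton almost exactly: reduce to $Q=Q_\gl$ by linearity of $F_m^n$, $\tau^{\frac{n}{m}}$ and the plethysm; compare coefficients in the basis $\{Q_\mu\}_{\mu\vdash md}$; test the identity against all companion knots $K$ and all specialisations $v=s^{-N}$ via the comparison theorem; and compute the cabled invariant $J(K*T_m^n;V_\gl^{(N)})$ by the trace formula (\ref{weighttrace}) together with Rosso--Jones, whose output is $c_\mu=(f_\mu)^{n/m}\sum_{\nu\vdash m}\omega_\nu a_{\nu,\gl}^\mu$ with the plethysm coefficients $a_{\nu,\gl}^\mu$ entering exactly as you describe (using coprimality of $m$ and $n$ so that the underlying permutation is an $m$-cycle and $\chi_\nu(\sigma)=\omega_\nu$).

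However, the step you label the \emph{main obstacle} --- reconciling the Rosso--Jones scalar $f_\mu$ with the skein eigenvalue $\tau_\mu$, including the factor $x=s^{1/N}$ and the writhe corrections --- is not deferrable bookkeeping: it is precisely where the proof needs an idea, and your proposal supplies no mechanism for it. Tracking writhes through the comparison theorem (the writhe of $K*T_m^n$ is $m^2k+mn$ when $K$ has writhe $k$) shows the two sides agree provided $x^{d^2mn}c_\mu=a_\mu$, which reduces to the single normalisation identity $x^{(md)^2}f_\mu=\tau_\mu$ at $v=s^{-N}$. The paper proves this \emph{without} ever unwinding Rosso--Jones's framing conventions or computing $f_\mu$ explicitly, by a bootstrap through the case $m=n=1$: there the theorem is true by definition, since $T_1^1*Q_\gl=\tau_\gl Q_\gl$ is what defines the framing eigenvalue, while $W=V_\gl^{(N)}$ has a single one-dimensional highest weight space on which $(\beta_1)^1$ acts by $f_\gl$; applying the comparison theorem on both sides with $K$ the unknot yields $\tau_\gl=x^{|\gl|^2}f_\gl$ for all $\gl$, which is exactly the relation needed for general coprime $m,n$. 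Without this step (or an equivalent explicit evaluation of $f_\mu$), your argument pins down $F_m^n(Q_\gl)$ only up to an undetermined power of $x$ on each $Q_\mu$-component, so the identity as stated --- with its precise fractional-twist scalars $(\tau_\mu)^{\frac{n}{m}}$ --- is not yet established.
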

\begin{proof} Since $F_m^n, \tau ^{\frac{n}{m}}$ and the plethysm are linear in $Q$, it is enough to prove the result when $Q=Q_\gl$.

We start with an expression for $\tau ^{\frac{n}{m}}(Q_\gl[ P_m])$.
Recall (see section \ref{hook}) that $P_m=\sum _{\nu \vdash m } \omega _{\nu }Q_{\nu }$ with
\[
\omega _{\nu }=
\left\{
\begin{array}{cl}
0 & \mbox{if $\nu$  is not a hook partition},  \\
(-1)^b & \mbox{if $\nu $ is the hook partition $(a|b)$ of $m$}.
\end{array}
\right.
\]

Let $\gl\vdash d$.
Then $Q_\gl[P_m]=P_m[Q_\gl]=\sum_{\nu\vdash m} \omega_\nu Q_\nu[Q_\gl]$, so 
\begin{eqnarray*}\tau ^{\frac{n}{m}}(Q_\gl[P_m])&=&\tau ^{\frac{n}{m}}(\sum_{\nu\vdash m} \omega_\nu \sum_{\mu\vdash md}a_{\nu,\gl}^\mu Q_\mu)\\
&=&\sum_{\mu\vdash md} a_\mu Q_\mu,
\end{eqnarray*} with $a_\mu= (\tau_\mu)^{n/m}\sum_{\nu\vdash m} \omega_\nu a_{\nu,\gl}^\mu$.

We must now show that decorating the cable pattern $T_m^n$ with $Q_\gl$ gives the same result, in other words $F_m^n(Q_\gl)=T_m^n*Q_\gl=\sum_{\mu\vdash md}a_\mu Q_\mu$. 
It is enough to show that $P(K;T^n_m*Q_\gl)=\sum a_\mu P(K;Q_\mu)$ for all choices of knot $K$, and in turn it is enough to know this for all evaluations with $v=s^{-N}$.
 
Let $k$ be the writhe of $K$, hence the writhe of  $K*T_m^n$ is $m^2k+mn$. The comparison theorem establishes that
\[P(K;T^n_m*Q_\gl)=P(K*T_m^n;Q_\gl)=x^{d^2(m^2k+mn)}J(K*T_m^n;V_\gl^{(N)})\]
after the substitutions $v=s^{-N}$ and $x=s^{\frac{1}{N}}$.
 
We draw on \cite{Rosso} in calculating the invariant of cables coloured by a quantum group module to find $J(K*T_m^n;V_\gl^{(N)})$, where $N\geq l(\lambda )$.

The diagram $T_m^n$ is the closure of the framed braid $(\beta_m)^n$, which defines an endomorphism
 $J_{(\beta_m)^n}$ of $W=V^{\otimes m}$ when the braid $(\beta_m)^n$ is coloured by a module $V$ over $sl(N)_q$. We have $J(K*T_m^n;V)=\sum c_\mu J(K;V_\mu^{(N)})$ where $c_\mu$ is the trace of  $J_{(\beta_m)^n}$ restricted to the highest weight subspace $W_\mu\subset W$ of type $\mu$, using equation (\ref{weighttrace}).

Rosso and Jones  calculate the trace of such a restriction for a general quantum group. In their paper the subspace $W_\mu$ is called $M_\nu$, the braid is $\overline X_n^m$, with a slightly different framing, and the roles of $m$ and $n$ are interchanged. In our terminology they observe that $(\beta_m)^m$ operates as a scalar $f_\mu$ on $W_\mu$, and that consequently $J_{(\beta_m)^n}$ operates as $(f_\mu)^{n/m}$ times a matrix $D_\mu$ with integer trace. Our choice of framing on $\beta_m$ ensures that the scalar $f_\mu$ does not depend on $V$.

 The trace of $D_\mu$ is independent of the quantum parameter and can be calculated from classical invariant theory in terms of the decomposition of $V^{\otimes m}$ as $\bigoplus_{\nu\vdash m}(N_\nu \otimes L_\nu)$, where the symmetric group $S_m$ acts on $N_\nu$ and $gl(V)$ acts on $L_\nu$ by the irreducible representation given by the partition $\nu$. (Rosso and Jones use $Y$ in place of $\nu$ here, and $\nu$ in place of $\mu$). They decompose $L_\nu$ further into irreducible modules $V_\mu$ over the classical version of the quantum group, with some multiplicity $[\nu:\mu]$, and derive the  formula   $\mbox{tr}\, D_\mu=\sum_{\nu\vdash m}\chi_\nu(\sigma)[\nu:\mu]$, where $\chi_\nu$ is the character defined by the representation of $S_m$ on $N_\nu$, and $\sigma$ is the permutation of the braid $(\beta_m)^n$.

When $m$ and $n$ are coprime the permutation is an $m$-cycle, and its character is $\chi_\nu(\sigma)=\omega_\nu$ in the terminology above. 

Where $V=V_\gl^{(N)}$ is the irreducible representation of $sl(N)_q$ of highest weight $\gl\vdash d$, the decomposition of $L_\nu$ is given by the plethysm coefficients  $a_{\nu,\gl}^\mu $ for $s_\mu[s_\gl]$, following the interpretation of plethysms as composite of representations of general linear or symmetric groups \cite{{Macdonald},{Remmel}}. This gives $L_\nu=\sum_{\mu \vdash md}a_{\nu,\gl}^\mu V_\mu^{(N)}$, leading to the formula
\[\mbox{tr}\, D_\mu=\sum_{\nu\vdash m}\omega_\nu a_{\nu,\gl}^\mu.\]
 
It follows that $c_\mu = (f_\mu)^{n/m}\mbox{tr}\, D_\mu  =   (f_\mu)^{n/m}\sum_{\nu\vdash m}\omega_\nu
a_{\nu,\gl}^\mu $, hence
\[
J(K*T_m^n;V_\gl^{(N)})
=\sum_{\mu\vdash md} c_\mu J(K;V_\mu^{(N)})
=\sum_{\mu\vdash md} (f_\mu)^{n/m}\sum_{\nu\vdash m}\omega_\nu a_{\nu,\gl}^\mu J(K;V_\mu^{(N)}).
\]

Then
\[
P(K;T^n_m*Q_\gl)=x^{d^2(m^2k+mn)}\sum_{\mu\vdash md} c_\mu J(K;V_\mu^{(N)})
\]
  while $\sum a_\mu P(K;Q_\mu)=x^{(md)^2k}\sum a_\mu J(K;V_\mu^{(N)})$, also by the comparison theorem.
These will give the same value provided that $x^{d^2mn}c_\mu=a_\mu$, and for this it is enough to know that $x^{(md)^2}f_\mu=\tau_\mu$, when $v=s^{-N}$ and $x=s^{1/N}$.
 
Now theorem \ref{skeinplethysm} holds when $m=n=1$, since $T_1^1*Q_\gl=\tau_\gl Q_\gl$ for any $\gl$, by the definition of $\tau_\gl$. Taking $V=V_\gl^{(N)}$ with $m=1$ gives $W=V_\gl^{(N)}$. This has only one non-zero highest weight subspace $W_\gl$, which has dimension $1$, and the braid $(\beta_1)^1$ acts on it by the scalar $f_\gl$, so $J(K*T_1^1;V_\gl^{(N)})=f_\gl J(K;V_\gl^{(N)})$. The comparison theorem   shows on the one hand that 
\[P(K*T_1^1;Q_\gl)=P(K;T_1^1*Q_\gl)=\tau_\gl P(K;Q_\gl)=x^{k|\gl|^2}\tau_\gl J(K; V_\gl^{(N)}),\] and on the other hand that \[
P(K*T_1^1;Q_\gl)= x^ {(k+1)|\gl|^2}f_\gl J(K;V_\gl^{(N)})\]
 for any $K$, with $v=s^{-N}$. Taking $K$ to be the trivial knot then establishes the relation $\tau_\gl=x^{|\gl|^2}f_\gl$ for all $\gl$, with $v=s^{-N}$ and $x=s^{1/N}$, which completes the proof.
\end{proof}

  As a corollary to theorem \ref{skeinplethysm} we have the following formula for  $T_m^n*P_d$, entirely in the Homfly skein $\CC$,  when the $(m,n)$ cable pattern $T_m^n$ is decorated by a power sum:
\begin{corollary}\label{corcable}
\begin{equation}\label{eqncable} T_m^n*P_d=F_m^n(P_d)=\tau^{\frac{n}{m}}(P_{md})=\sum _{\nu \vdash md}(\tau_{\nu})^{\frac{n}{m}}\omega _{\nu }Q_{\nu }.
\end{equation}
\end{corollary}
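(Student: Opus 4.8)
The plan is to read this off directly from Theorem~\ref{skeinplethysm} by specialising $Q=P_d$ and then simplifying the resulting plethysm with the identities already assembled before the theorem. First I would apply Theorem~\ref{skeinplethysm} to the element $Q=P_d\in\CC_+$. Since $P_d=\sum x_i^d$ is a legitimate element of $\CC_+$, this gives at once
\[
F_m^n(P_d)=\tau^{\frac{n}{m}}(P_d[P_m]).
\]
This is the only substantive input; the remaining equalities are bookkeeping with facts stated earlier.

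Next I would invoke property~4 of plethysms listed just before Theorem~\ref{skeinplethysm}, namely $P_d[P_m]=P_{md}$, which collapses the plethysm of two power sums into a single power sum. Substituting this yields
\[
F_m^n(P_d)=\tau^{\frac{n}{m}}(P_{md}),
\]
establishing the middle equalities $T_m^n*P_d=F_m^n(P_d)=\tau^{\frac{n}{m}}(P_{md})$ in the statement (the identification $T_m^n*P_d=F_m^n(P_d)$ being just the definition of $F_m^n$).

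To obtain the explicit expansion in the Schur basis, I would recall from section~\ref{hook} the hook expansion of a power sum, $P_{md}=\sum_{\nu\vdash md}\omega_\nu Q_\nu$, with $\omega_\nu=(-1)^b$ when $\nu=(a|b)$ is a hook partition of $md$ and $\omega_\nu=0$ otherwise. Applying $\tau^{\frac{n}{m}}$ term by term, using its diagonal action $\tau^{\frac{n}{m}}(Q_\nu)=(\tau_\nu)^{\frac{n}{m}}Q_\nu$ on the basis $\{Q_\nu\}$, then gives
\[
\tau^{\frac{n}{m}}(P_{md})=\sum_{\nu\vdash md}(\tau_\nu)^{\frac{n}{m}}\omega_\nu Q_\nu,
\]
which is the desired formula.

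I do not expect a genuine obstacle here, since the result is a direct corollary: each step is a single application of an already-proved statement (Theorem~\ref{skeinplethysm}, plethysm property~4, the hook expansion of $P_{md}$, and the definition of $\tau^{\frac{n}{m}}$). The only point requiring a small check is that $P_d$ qualifies as the kind of pattern to which Theorem~\ref{skeinplethysm} applies and that property~4 is used in the correct order ($P_d[P_m]=P_{md}$ rather than the reverse), but both are immediate from the definitions, so the proof is essentially a chain of substitutions.
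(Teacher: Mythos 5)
Your proposal is correct and is precisely the argument the paper intends: the corollary is stated without a written proof as an immediate consequence of theorem~\ref{skeinplethysm}, and the intended chain is exactly yours — specialise $Q=P_d$, collapse $P_d[P_m]=P_{md}$ by the power-sum plethysm identity, expand $P_{md}$ in hooks via $P_{md}=\sum_{\nu\vdash md}\omega_\nu Q_\nu$, and apply the diagonal action of $\tau^{\frac{n}{m}}$ on the basis $\{Q_\nu\}$. Your closing checks (that $P_d\in\CC_+$ qualifies, and that the order of the plethysm is immaterial here since $P_d[P_m]=P_m[P_d]=P_{md}$) are also accurate.
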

Note that only $md$-hooks $\nu$ appear in this formula, because $\omega_\nu=0$ for any other partition $\nu\vdash md$.

Theorem \ref{Ahook} is the special case of (\ref{eqncable}) when $n=d=1$. Indeed, we
have \[A_m=vT_m^1=v\sum \omega_\nu (\tau_\nu)^{\frac{1}{m}}Q_\nu=\sum_{a+b+1=m}(-1)^b s^{a-b}Q_{(a|b)},\] since the sum of the contents  of an $m$-hook $(a|b)$ is $\frac{1}{2}m(a-b)$ and so $\tau_{(a|b)}=v^{-m}s^{m(a-b)}$.

\section{Decorating by power sums}\label{powercable}

We now use corollary \ref{corcable} to establish some results   in the Homfly skein, where we consider decorations of knots or links by power sums $P_M$. In part this has been encouraged by the known and conjectured integrality results  for such invariants from \cite{LM} and \cite{OV} and work by Garoufalidis and Le in trying to develop some direct skein theoretic properties of these invariants.  In particular, substitution of the element $X_M$ in place of $P_M$ can have a good effect because this can be represented by a positive integer sum of diagrams, and then the integrality of the standard Homfly polynomial  for links can be used.

We don't have a general means of working purely with power sum decorations at the skein level of the underlying diagrams, but in theorem \ref{powerHopf} we give   a relation in $\CC$ between some diagrams when decorated by power sums,  originally conjectured by  the first author  in the course of a visit to Garoufalidis at Georgia Institute of Technology in 2003.

\subsection{Murphy operators in the Hecke algebras}
 The Murphy operators $T_1,\ldots,T_n$ are commuting elements in the Hecke algebra $H_n$, where $T_i$ is represented by the framed diagram in figure \ref{Ti}. 
  The framing used here, which is inherited from the surface of a vertical cylinder, agrees with that used for $T_i$ in \cite{AistonMorton}, while the element $T(i)$ in \cite{skein} is $T_i$ with the blackboard framing as a braid.  Any symmetric polynomial $f(T_1,\ldots,T_n)$ is in the centre of $H_n$. More details of this description of $H_n$ and the Murphy operators can be found in \cite{skein}. 

\begin{figure}[ht!]
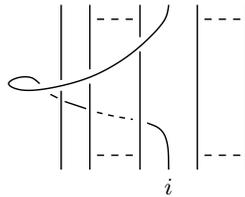

\labellist
\small
\pinlabel {$i$} at 203 -19
\endlabellist
\bc
\Tiblackboardframing
\ec

\caption{A diagram for $T_i$ with the blackboard framing}\label{Ti}
\end{figure}

For $\mu\vdash n$ write $e_\mu\in H_n$ for Aiston's idempotent \cite{AistonMorton}, whose closure in $\CC$ is $Q_\mu$.
The details of the following result are due to Lukac:
\begin{theorem}\label{symmetricMurphy} The symmetric polynomial $f(T_1,\ldots,T_n)$ satisfies 
\[f(T_1,\ldots,T_n)e_\mu=f(y_1,\ldots,y_n)e_\mu,\] where  $\{y_1,\ldots,y_n\}=\{v^{-1}s^{2c(x)}\}_{x\in\mu}$ as an unordered set.
\end{theorem}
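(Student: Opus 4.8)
The plan is to read off the action of $f(T_1,\ldots,T_n)$ from the known simultaneous diagonalisation of the commuting Murphy operators on the irreducible $H_n$-module labelled by $\mu$, and then use centrality to transfer this scalar action onto the idempotent $e_\mu$. The symmetry of $f$ is what makes the resulting scalar depend only on the unordered content multiset of $\mu$.

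First I would recall the key input about the individual operators: on the irreducible module $V^\mu$ carried by $e_\mu$ there is a basis $\{v_t\}$ indexed by the standard tableaux $t$ of shape $\mu$ (the seminormal basis) on which each Murphy operator acts diagonally,
\[ T_i\,v_t = v^{-1}s^{2c_t(i)}\,v_t, \]
where $c_t(i)$ denotes the content $c(x)$ of the cell $x$ occupied by the entry $i$ of $t$. This is the Hecke-algebra analogue of the classical eigenvalues of the Jucys--Murphy elements, written in the framing normalisation adopted here for $T_i$, and is precisely the content of the statement attributed to Lukac; it can be extracted from \cite{AistonMorton,skein}. Since $f$ is a symmetric polynomial, it then acts on $v_t$ by
\[ f(T_1,\ldots,T_n)\,v_t = f\!\left(v^{-1}s^{2c_t(1)},\ldots,v^{-1}s^{2c_t(n)}\right)v_t. \]
As the entry $i$ runs over $1,\ldots,n$ the occupied cells run over all the cells of $\mu$ exactly once, so the multiset $\{v^{-1}s^{2c_t(i)}\}_{i=1}^n$ equals $\{v^{-1}s^{2c(x)}\}_{x\in\mu}=\{y_1,\ldots,y_n\}$; by symmetry of $f$ the eigenvalue is therefore $f(y_1,\ldots,y_n)$, the same value for every tableau $t$. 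Hence $f(T_1,\ldots,T_n)$ acts as the single scalar $f(y_1,\ldots,y_n)$ on all of $V^\mu$.

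To finish, I would transfer this to $e_\mu$. Since $e_\mu$ is a primitive idempotent generating the irreducible left ideal $H_n e_\mu\cong V^\mu$, left multiplication by the central element $f(T_1,\ldots,T_n)$ acts as $f(y_1,\ldots,y_n)$ on this ideal, and applying it to $e_\mu\in H_n e_\mu$ gives $f(T_1,\ldots,T_n)\,e_\mu=f(y_1,\ldots,y_n)\,e_\mu$. (Equivalently, since $e_\mu$ is itself the seminormal eigen-idempotent of a fixed tableau, one can argue directly that $T_i e_\mu = v^{-1}s^{2c(x_i)}e_\mu$ and feed this into $f$.) The main obstacle is the first input: verifying that the framing inherited from the vertical cylinder produces exactly the factor $v^{-1}$ together with the content weight $s^{2c(x)}$ in the eigenvalues, consistent with the framing-change eigenvalues $\tau_\mu=v^{k}s^{r}$, $r=2\sum_{x\in\mu}c(x)$, quoted from \cite{AistonMorton}. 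Once the single-operator eigenvalues are fixed in this normalisation, the symmetry-plus-centrality argument is routine.
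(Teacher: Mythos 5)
Your proof is correct in its main line, but it takes a genuinely different route from the paper's. The paper stays entirely inside the skein-theoretic idempotent calculus of \cite{AistonMorton}: it orders the cells of $\mu$ (and the corresponding Murphy operators) along a standard tableau, so that the first $k$ cells form a Young diagram for each $k$, and establishes the statement first for elementary symmetric polynomials using theorem 17 of \cite{AistonMorton} --- the framed full-twist eigenvalue result quoted in section 6 of the paper, which applies here because $T_1\cdots T_k$ is the framed full twist on the first $k$ strings; the general case then follows because every symmetric polynomial is a polynomial in the elementary ones. You instead import the general semisimple representation theory of the Hecke algebra: the seminormal basis on which each $T_i$ acts diagonally with content eigenvalues, symmetry of $f$ to make the resulting scalar tableau-independent, and centrality plus Schur's lemma to transfer that scalar to the left ideal $H_ne_\mu$. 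Both arguments are sound. The paper's route has the advantage of working directly with the geometrically defined idempotents in the normalisation already fixed skein-theoretically, so no translation of conventions is needed; your route is more standard algebraically and yields the central character of any central element, but it carries exactly the bookkeeping you flag yourself: verifying the $v^{-1}s^{2c(x)}$ normalisation of the Murphy eigenvalues in this framing, and also checking that Aiston's $e_\mu$ generates the irreducible module carrying the \emph{same} label $\mu$ (rather than, say, its conjugate) as in the seminormal labelling, since otherwise the contents would be those of the wrong diagram.

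One aside in your proposal is not right as stated: Aiston's $e_\mu$ is a row/column symmetrizer-type idempotent, not the seminormal idempotent of a fixed tableau, so the relation $T_ie_\mu=v^{-1}s^{2c(x_i)}e_\mu$ cannot be obtained by simply identifying $e_\mu$ with a seminormal idempotent. That each $T_i$ does act as a scalar on $e_\mu$ for a tableau-compatible ordering is true, but it needs an argument --- essentially the nested full-twist argument underlying the paper's own proof. Since you offer this only as an equivalent alternative, your main argument is unaffected.
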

\begin{proof} The result can be established, using theorem 17 of \cite{AistonMorton}, in the case where $f$ is an elementary symmetric function, choosing an ordering for the cells, and the corresponding Murphy operators, so that the first $k$ cells form a legitimate Young diagram for each $k$. 

Since any symmetric polynomial is a polynomial in the elementary symmetric functions the general case follows.
\end{proof}
 
\subsection{Power sums and cables}
For $Y\in\CC$ we define a linear map $\Delta_Y:\CC\to\CC$ by $\Delta_Y(Q)=\varphi_Y(Q)-P(Y) Q$. Here    $\varphi_Y(Q)\in\CC$ is the element illustrated in  figure \ref{figphiY} and $P(Y)$ is the Homfly polynomial of $Y$ regarded as a decoration of the unknot in the plane.

\begin{figure}[ht!]
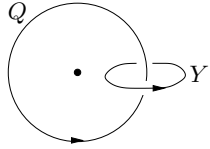

\labellist
\small
\pinlabel {$Q$} at 20 260
\pinlabel {$Y$} at 380 140
\endlabellist
\bc
\meridianmap
\ec
\caption{The element $\varphi_Y(Q) \in \CC$, where $Q,Y \in \CC$}\label{figphiY}
\end{figure}

When $Y$ is the core curve $A_1=P_1$ in the annulus, we have earlier used the notation $\Delta_\varphi$ in place of $\Delta_{P_1}$ in theorem \ref{Xdelta}.

We know that $\qlm$ is an eigenvector for any $\Delta_Y$, by \cite{HadjiMorton}, since $\Delta_Y$ commutes with $\varphi$ and  all
the eigenvalues of $\varphi$ have multiplicity 1.
Write $\Delta_\varphi(Q_\mu)=\Delta_{P_1}(Q_\mu)=t_\mu Q_\mu$, where   $t_\mu=\{1\}v^{-1}\sum_{x\in\mu}s^{2c(x)}$, by equation (\ref{eqndeltaphi}). 
We define $\psi_N:\Lambda\to \Lambda$ by $\psi_N(s)=s^N, \psi_N(v)=v^N$.

\begin{lemma}\label{DeltaPN} $\Delta_{P_N}(Q_\mu)= \psi_N(t_\mu)Q_\mu=\{N\}v^{-N}\sum_{x\in\mu}(s^N)^{2c(x)}Q_\mu$.
\end{lemma}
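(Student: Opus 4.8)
The plan is to compute the eigenvalue of $\Delta_{P_N}$ on $Q_\mu$ by realising the $P_N$-decorated meridian in terms of the Murphy operators $T_1,\dots,T_n$ and then invoking theorem~\ref{symmetricMurphy}. Write $Q_\mu=\widehat{e_\mu}$ as the closure of Aiston's idempotent $e_\mu\in H_n$, with $\mu\vdash n$, and set $y_x=v^{-1}s^{2c(x)}$ for $x\in\mu$, so that $\{y_1,\dots,y_n\}=\{v^{-1}s^{2c(x)}\}_{x\in\mu}$ is exactly the multiset appearing in theorem~\ref{symmetricMurphy}.

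First I would record the target. Since $\psi_N$ is the ring endomorphism of $\Lambda$ with $\psi_N(s)=s^N$, $\psi_N(v)=v^N$, we have $\psi_N(\{1\})=\{N\}$ and $\psi_N(y_x)=v^{-N}s^{2Nc(x)}=y_x^{\,N}$, whence
\[
\psi_N(t_\mu)=\psi_N\!\Big(\{1\}\sum_{x\in\mu}y_x\Big)=\{N\}\sum_{x\in\mu}y_x^{\,N}=\{N\}\,p_N(y_1,\dots,y_n),
\]
where $p_N$ denotes the $N$th power sum. Thus the statement is equivalent to $\Delta_{P_N}(Q_\mu)=\{N\}\,p_N(y_1,\dots,y_n)\,Q_\mu$, and the right-hand side is precisely $\{N\}$ times the scalar by which the central element $p_N(T_1,\dots,T_n)$ acts on $e_\mu$, by theorem~\ref{symmetricMurphy} applied to the symmetric function $f=p_N$.

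The crux is therefore a purely skein-theoretic identity: encircling $\widehat{e_\mu}$ by a meridian decorated with $P_N$, and subtracting the unlinked contribution $P(P_N)\,Q_\mu$, realises $\{N\}$ times the closure of $p_N(T_1,\dots,T_n)\,e_\mu$; in symbols
\[
\Delta_{P_N}(Q_\mu)=\{N\}\,\widehat{p_N(T_1,\dots,T_n)\,e_\mu}.
\]
This is exactly the sense in which $P_N$ represents the $N$th power sum of the Murphy operators, established in \cite{skein}; I would invoke that identification, being careful to track the scalar $\{N\}$ and the framing conventions for $T_i$ used in figure~\ref{Ti}. As a consistency check, for $N=1$ the core curve $A_1=P_1$ encircling all strands is the total Murphy element $\sum_i T_i$, and the formula reproduces equation~(\ref{eqndeltaphi}), namely $\Delta_\varphi(Q_\mu)=\{1\}v^{-1}\sum_{x\in\mu}s^{2c(x)}Q_\mu$.

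Combining the two displays finishes the argument: by theorem~\ref{symmetricMurphy} the closure $\widehat{p_N(T_1,\dots,T_n)e_\mu}$ equals $p_N(y_1,\dots,y_n)\,Q_\mu$, so
\[
\Delta_{P_N}(Q_\mu)=\{N\}\,p_N(y_1,\dots,y_n)\,Q_\mu=\psi_N(t_\mu)\,Q_\mu=\{N\}v^{-N}\sum_{x\in\mu}(s^N)^{2c(x)}\,Q_\mu.
\]
I expect the main obstacle to be the first of the two displays above --- the precise skein identification of the $P_N$-meridian with the power sum of Murphy operators, including the correct scalar $\{N\}$ --- rather than the symmetric-function bookkeeping, which is immediate once theorem~\ref{symmetricMurphy} is in hand.
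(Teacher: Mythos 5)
Your proposal is correct and follows essentially the same route as the paper: the paper also writes $\Delta_{P_N}(Q_\mu)$ as the closure of $Y_{M,N}\,e_\mu$, invokes theorem~3.9 of \cite{skein} for the identity $Y_{M,N}=\{N\}\sum_j T_j^N$ (your ``crux'' display, with the scalar $\{N\}$ coming from that theorem), and then applies theorem~\ref{symmetricMurphy} with $f=p_N$ before closing up. The only difference is presentational: the paper cites the specific result in \cite{skein} rather than leaving the identification as a step to be checked.
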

\begin{proof}  Suppose that $\mu$ is a partition of $M$.
The element $\Delta _{P_{N}}(Q_{\mu })$ is the closure of   $Y_{M,N}\,e_{\mu }$, where $Y_{M,N}$ is the following
element of the Hecke algebra $H_M$:
\[Y_{M,N}={\labellist
\small
\pinlabel {$P_N$} at 1 145
\pinlabel {$M$ strings} at 46 195
\endlabellist} \Heckemeridian \ -\ {\labellist
\small
\pinlabel {$P_N$} at 45 153
\pinlabel {$M$ strings} at 122 195
\endlabellist} \deltaIdentity\]

Theorem 3.9 in \cite{skein} establishes the equation $Y_{M,N}=\{ N\} \sum _{j=1}^MT_j^N$ in $H_M$,
where the framing for each $T_j$ is given above. It follows that $Y_{M,N}e_\mu=\{N\}\sum_{x\in\mu}(v^{-1}s^{2c(x)})^N e_\mu$ by theorem \ref{symmetricMurphy}.
Taking the closure gives \[\Delta_{P_N}(Q_\mu)=\{N\}\sum_{x\in\mu}(v^{-1}s^{2c(x)})^N Q_\mu.\]

When $N=1$ we have $\Delta_\varphi(Q_\mu)=\{1\}\sum_{x\in\mu}v^{-1}s^{2c(x)} Q_\mu =t_\mu Q_\mu$, while the general result reads $\Delta_{P_N}(Q_\mu)=\psi_N(t_\mu) Q_\mu$.
\end{proof}

\begin{theorem} \label{powerHopf} $\Delta_{P_N}(P_M)=\{MN\}T_m^n *P_d$, where $d=\gcd(M,N)$, $M=md$ and $ N=nd$.
\end{theorem}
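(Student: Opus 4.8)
The plan is to reduce the statement to the eigenvalue computation of Lemma~\ref{DeltaPN} combined with the cable formula of Corollary~\ref{corcable}, by expanding both sides in the Schur basis $\{Q_\nu\}$. Since $P_M$ is a symmetric function, it is enough to work component by component; I would first expand $P_M$ in the hook basis using the Frobenius formula $P_M=\sum_{\nu\vdash M}\omega_\nu Q_\nu$ recorded in section~\ref{hook}, and then apply $\Delta_{P_N}$ using the fact, established in Lemma~\ref{DeltaPN}, that each $Q_\nu$ is an eigenvector with eigenvalue $\psi_N(t_\nu)=\{N\}v^{-N}\sum_{x\in\nu}(s^N)^{2c(x)}$.

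Next I would evaluate that eigenvalue explicitly on an $M$-hook $\nu=(a|b)$ with $a+b+1=M$. Using the content computation already exploited after Corollary~\ref{corcable}, namely $\sum_{x\in\nu}s^{2c(x)}=[M]\,s^{a-b}=\frac{\{M\}}{\{1\}}s^{a-b}$, I can write $\sum_{x\in\nu}(s^N)^{2c(x)}=\frac{\{MN\}}{\{N\}}(s^N)^{a-b}$. Substituting this gives, for each hook,
\[
\Delta_{P_N}(Q_{(a|b)})=\{N\}v^{-N}\frac{\{MN\}}{\{N\}}(s^N)^{a-b}Q_{(a|b)}
=\{MN\}\,v^{-N}(s^N)^{a-b}Q_{(a|b)}.
\]
Summing against the signs $\omega_{(a|b)}=(-1)^b$ then produces
\[
\Delta_{P_N}(P_M)=\{MN\}\sum_{a+b=M-1}(-1)^b v^{-N}(s^N)^{a-b}Q_{(a|b)}.
\]

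Finally I would match this against the right-hand side $\{MN\}T_m^n*P_d$. By Corollary~\ref{corcable} we have $T_m^n*P_d=\sum_{\nu\vdash md}(\tau_\nu)^{n/m}\omega_\nu Q_\nu$, and since $md=M$ the relevant $\nu$ are exactly the $M$-hooks $(a|b)$. The task is therefore to check that the coefficient $(\tau_{(a|b)})^{n/m}$ equals $v^{-N}(s^N)^{a-b}$. Using $\tau_{(a|b)}=v^{-M}s^{M(a-b)}$ (the framing eigenvalue from the remark after Theorem~\ref{Ahook}, computed from $\sum c(x)=\tfrac12 M(a-b)$) together with $N/M=n/m$ and $M=md$, I get $(\tau_{(a|b)})^{n/m}=v^{-M\cdot n/m}s^{M(a-b)\cdot n/m}=v^{-N}s^{N(a-b)}$, which matches term by term.

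\emph{The main obstacle} I anticipate is purely bookkeeping rather than conceptual: one must be careful that $M$-hooks of $M=md$ are the only partitions contributing on both sides (guaranteed by $\omega_\nu=0$ off the hooks, as noted after Corollary~\ref{corcable}), and that the fractional exponent $n/m$ applied to $\tau_{(a|b)}$ produces an honest Laurent monomial in $v,s$ rather than a genuine fractional power — this works precisely because $N=nd$ and $M=md$ share the divisor structure, so $M\cdot(n/m)=N$ is an integer. Once the eigenvalue of $\Delta_{P_N}$ and the framing eigenvalue $\tau_{(a|b)}^{n/m}$ are both reduced to monomials in $v^{-N}$ and $s^{N(a-b)}$, the two sides coincide coefficient by coefficient and the proof is complete.
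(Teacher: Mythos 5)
Your proposal is correct and follows essentially the same route as the paper's own proof: expand $P_M$ in the hook basis via the Frobenius formula, apply the eigenvalue of Lemma~\ref{DeltaPN} and the hook content sum to get the coefficient $\{MN\}v^{-N}s^{N(a-b)}$, and match this against Corollary~\ref{corcable} using $(\tau_{(a|b)})^{n/m}=v^{-N}s^{N(a-b)}$. The only addition beyond the paper's argument is your explicit remark that $M\cdot(n/m)=N$ keeps the fractional twist an honest Laurent monomial, which the paper leaves implicit.
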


\begin{proof}
\begin{eqnarray*}\Delta_{P_N}(P_M)&=&\sum_{\mu\vdash M}\omega_\mu \Delta_{P_N}( Q_\mu)\\
&=&\sum_{\mu\vdash M}\omega_\mu \psi_N(t_\mu) Q_\mu\\
&=&\sum_{\mu\vdash M}\omega_\mu (\{N\}\sum_{x\in\mu}v^{-N} (s^N)^{2c(x)}) Q_\mu\\
&=&\sum_{\mu=(a|b)\vdash M}\omega_\mu v^{-N} ((s^N)^M-(s^N)^{-M}) s^{N(a-b)}Q_\mu\\
&=&\sum_{\mu=(a|b)\vdash M}\omega_\mu v^{-N} \{MN\} s^{N(a-b)}Q_\mu .
\end{eqnarray*}

Now equation (\ref{eqncable}) shows that \[T_m^n*P_d=\sum_{\mu\vdash md}\omega_\mu (\tau_\mu)^{n/m}Q_\mu.\]

For a hook partition $\mu =(a|b)\vdash md=M$  we have $2\sum_{x\in\mu} c(x)=M(a-b)$, hence $\tau_{\mu}=v^{-M}s^{M(a-b)}$.
Then $(\tau_{(a|b)})^{n/m}=(\tau_{(a|b)})^{N/M}=v^{-N}s^{N(a-b)}$, giving the result.
\end{proof} 

We show theorem \ref{powerHopf} in diagrammatic form in figure \ref{figpowerHopf}, where $m=4, n=5$ in the torus knot diagram, and all the diagrams are decorated by power sums.
 
\begin{figure}[ht]
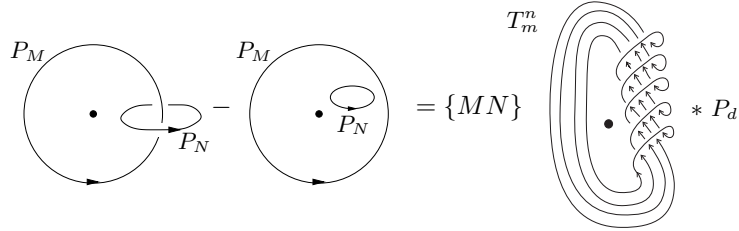

\bc
{\labellist
\small
\pinlabel {$P_M$} at 10 266
\pinlabel {$P_N$} at 340 85
\endlabellist}
\meridianmap $\ -\ ${\labellist
\small
\pinlabel {$P_M$} at 10 266
\pinlabel {$P_N$} at 207 120
\endlabellist}
\deltaA 
 \quad $=\{ MN \}\   $ {\labellist
\small
\pinlabel {$*\ P_d$} at 600 330
\pinlabel {$T_m^n$} at 220 506
\endlabellist}
\torusknot
\ec
\caption{In this formula $d=\gcd(M,N)$, $M=md$ and $N=nd$} \label{figpowerHopf}
\end{figure}

\section{Examples}\label{Ex}
We conclude with some explicit special cases of theorem \ref{powerHopf}, which inspired its general formulation.

\begin{example} When $M$ and $N$ are coprime then $d=1, m=M$, $n=N$, and so
 $\Delta_{P_n}(P_m)=\{mn\}T_m^n$, the framed $(m,n)$ torus link in the solid torus.
\end{example}

\begin{example} When $N$ is a multiple of $M$ then $d=M, m=1$, $N=Mn$, and so
   $\Delta_{P_{Mn}}(P_M)=\{M^2n\}T_1^n*P_M$.

In particular where $M=N$, of interest in considering links with all components decorated by $P_M$, we have $m=n=1$, $d=M$ and $\Delta_{P_{M}}(P_M)=\{M^2\}T_1^1*P_M$, giving a formula for the effect of the framing change on $P_M$ in terms of other links decorated by $P_M$ (see figure \ref{figPM}).
\end{example}

\begin{figure}[ht]
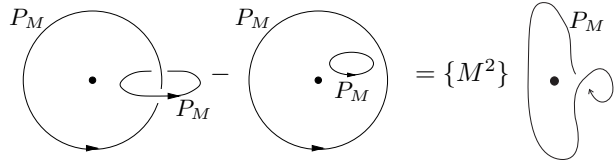

\bc
{\labellist
\small
\pinlabel {$P_M$} at 10 266
\pinlabel {$P_M$} at 340 85
\endlabellist}
\meridianmap $\ -\ ${\labellist
\small
\pinlabel {$P_M$} at 10 266
\pinlabel {$P_M$} at 207 120
\endlabellist}
\deltaA 
 \quad $=\{ M^2 \}   $ {\labellist
\small
\pinlabel {$ P_M$} at 290 450
\endlabellist}
\framechange
\ec
\caption{The effect of the framing change on $P_M$} \label{figPM}
\end{figure}

\begin{example} When $M$ is a multiple of $N$, then $d=N, n=1$, $M=mN$ and so
$\Delta_{P_{N}}(P_{mN})=\{mN^2\}T_m^1*P_N$.

The special case where $N=1$ gives $\Delta_\varphi(P_m)=\Delta_{P_{1}}(P_m)=\{m\}T_m^1=v^{-1}\{m\}A_m$.
\end{example}

This may be compared with the formula $\Delta_{\overline{\varphi}}(A_m) =-v\{m\}P_m$ obtained in theorem
\ref{Xdelta}, which leads to the formula
\[\Delta_{\overline{\varphi}}\Delta_\varphi(P_m)=-\{m\}^2 P_m.\]

Indeed, lemma \ref{DeltaPN} shows that $\Delta_{\overline{\varphi}}\Delta_\varphi$ operates as the scalar $-\{m\}^2$ on the subspace of $\CC_+$ spanned by $m$-hooks, for each $m$. More generally, $\Delta_{P_N^*}\Delta_{P_N}$ operates as $-\{mN\}^2$ on this subspace, where $P_N^*\in\CC$ is $P_N$ with the string orientations reversed.

\section*{Acknowledgments}  The first author  thanks Stavros Garoufalidis for hospitality and   support received at Georgia Institute of Technology in 2003 when the ideas in the later part of this paper were formulated. He is also grateful to Universidad Complutense, Madrid for support during work on the paper in 2007.  

The second author is grateful to The University of
Liverpool for its hospitality during work on this paper in the spring of 2005, financed by the Secretar\'{\i}a de Estado de
Universidades e Investigaci\'on del Ministerio de Educaci\'on y Ciencia, Spain, ref. PR2005-0099.

\end{document}